\DeclarePairedDelimiter{\ceil}{\lceil}{\rceil}
\DeclarePairedDelimiter\floor{\lfloor}{\rfloor}
\titleformat{\section}[block]{\centering\Large}{\thesection}{1em}{}
\theoremstyle{definition}
\newtheorem*{defi}{Definition}
\theoremstyle{remark}
\newtheorem*{rem}{Remark}
\theoremstyle{plain}
\newtheorem{que}{Question}[section]
\newtheorem{thm}{Theorem}[section]
\newtheorem{prop}[thm]{Proposition}
\newtheorem{lem}[thm]{Lemma}
\newtheorem{cor}[thm]{Corollary}
\title{Classification of Lattices Bounded by Large Surgeries of Knots}
\author{Ali Naseri Sadr}
\date{}
\begin{document}

\maketitle
\begin{abstract}
    We classify all the lattices realized as the intersection form of a positive definite four manifold with boundary $S_n^3(K)$ for a knot $K$ in the three sphere and a positive integer $n$ greater than $4g_4(K)+3$. We then use this result to define a concordance invariant and generalize a theorem of Rasmussen on lens space surgeries.
\end{abstract}
\section{Introduction}
Inspired by the results in \cite{golla2019definite}, we give a complete classification of the lattices arising as the intersection form of a positive definite four manifold with boundary $X$, where the boundary $Y$ is large surgery along a knot $K$ in $S^3$. We then use this result to define a concordance invariant $l(K)$ and examine how it behaves under crossing change. In the last section, we generalize a result of Rasmussen from \cite{Rasmussen_2004} on lens space surgeries using our main theorem. 
\begin{defi}
\label{Lattice Def}
    We say an oriented three manifold $Y$ bounds a lattice $L$ if there exists an oriented four manifold $X$ with no torsion in its homology such that $\partial X=Y$ as an oriented manifold and $Q_X$ is isomorphic to $L$. 
\end{defi}
Our first goal is to prove the following theorem.
\begin{thm}
\label{Main THM}
    Consider a knot $K$ in $S^3$ and let $n$ be an integer greater than $4g_4(K)+3$ where $g_4(K)$ is the slice genus of $K$. Suppose $S^3_n(K)$ bounds a lattice $L$. Then $L$ is isomorphic to 
    $\langle n\rangle\oplus\langle 1\rangle^{rk(L)-1}$.
\end{thm}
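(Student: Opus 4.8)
The plan is to combine the Heegaard Floer correction-term obstruction to positive definite fillings with the large-surgery formula for $d$-invariants, and then to run a squeeze argument against a purely lattice-theoretic, Elkies-type bound. Let $X$ be a positive definite $4$-manifold with $H_*(X)$ torsion-free, $\partial X = S^3_n(K)$, and $Q_X \cong L$; write $r = rk(L)$. Since the homology is torsion-free and $|H_1(S^3_n(K))| = n$, the form $L$ is nondegenerate with $\det L = n$, its discriminant group $L^*/L$ is cyclic of order $n$, and it is identified with $\mathrm{Spin}^c(S^3_n(K)) \cong \mathbb{Z}/n$ compatibly with the linking form (via the standard affine identification). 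Reversing orientation, $\overline{X}$ is a negative definite filling of $-S^3_n(K)$, so the Ozsv\'ath--Szab\'o inequality for correction terms applies; unwinding orientations it states that every characteristic covector $\xi \in \mathrm{Char}(L)$ satisfies
\[ \xi^2 \ \ge\ r + 4\,d\bigl(S^3_n(K),[\xi]\bigr), \]
where $[\xi]$ is the spin$^c$ structure induced by $\xi$. As every spin$^c$ structure on the boundary extends over $X$, this lower bound is available in every coset of $L$ in $\mathrm{Char}(L)$.

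Next I would insert the large-surgery formula, which for $0 \le i \le n-1$ reads
\[ d\bigl(S^3_n(K),i\bigr)\ =\ \frac{(2i-n)^2-n}{4n}\ -\ 2\,V_{\min(i,\,n-i)}, \]
where the $V_j \ge 0$ are the local $h$-invariants of $K$; they are non-increasing and vanish for $j \ge g_4(K)$ (using $\nu^+(K) \le g_4(K)$). The hypothesis $n > 4g_4(K)+3$ forces $V_{\min(i,n-i)} = 0$ for every $i$ in the central band $g_4(K) \le i \le n-g_4(K)$, so on this band the lower bound coincides exactly with the one for the lens space $S^3_n(U) = L(n,1)$, namely $\xi^2 \ge \frac{(2i-n)^2}{n} + r - 1$ for the covectors in the corresponding cosets. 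The arithmetic in the numerical hypothesis is arranged precisely so that this ``unknot band'' is long enough for the final step.

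The remaining and decisive step is to match these lower bounds against an upper bound coming from the lattice alone. I would establish (in the spirit of \cite{golla2019definite}) a generalization of Elkies' theorem: a positive definite lattice of rank $r$ and determinant $n$ carrying the linking form of $L(n,1)$ admits, in each coset, a characteristic covector whose square is at most the displayed $L(n,1)$ value, with \emph{simultaneous} equality across a full central band occurring only for $\langle n \rangle \oplus \langle 1\rangle^{r-1}$; one checks directly that this lattice realizes every bound with equality, so the content is the rigidity. Feeding in the topological lower bound then squeezes every covector in the band to the extremal value, and the rigidity clause identifies $L$. Concretely I expect this to run as an induction on $r$: a norm-minimizing (``short'') characteristic covector satisfies $|\xi \cdot v| \le v\cdot v$ for all $v \in L$, and the forced equality in the cosets nearest $i \approx n/2$ produces a vector of norm $1$ in $L$; splitting off the resulting $\langle 1\rangle$ summand leaves a rank-$(r-1)$, determinant-$n$ lattice still satisfying the hypotheses, and the base case isolates the orthogonal $\langle n\rangle$ that carries the discriminant.

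I expect this lattice rigidity to be the principal obstacle. The inequalities only constrain covector norms coset by coset, and turning near-equality into an honest orthogonal splitting is delicate: one must genuinely produce norm-one vectors rather than merely bound norms, verify that the orthogonal complement inherits both the definiteness inequalities and the correct discriminant form so that the induction closes, and confirm that the band guaranteed by $n > 4g_4(K)+3$ is wide enough to supply the short covectors that the induction consumes at each stage. Pinning down exactly how wide the band must be --- and hence the constant $4g_4(K)+3$ --- is the quantitative heart of the argument.
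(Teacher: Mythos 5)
Your setup (torsion-free filling, $\det L = n$, the Ozsv\'ath--Szab\'o inequality $\xi^2 \ge r + 4d(S^3_n(K),[\xi])$ in every coset, and the Ni--Wu surgery formula) matches the paper. The gap is in how you propose to close the argument. You restrict the $d$-invariant computation to the central band $g_4(K) \le i \le n - g_4(K)$ where the $V_j$ vanish, and then you require a coset-by-coset rigidity theorem: in \emph{each} coset of the band there is a characteristic covector no longer than the $L(n,1)$ value, with simultaneous equality across the band forcing $L \cong \langle n\rangle \oplus \langle 1\rangle^{r-1}$. No such theorem is in the literature, you do not prove it, and you correctly identify it as the principal obstacle --- so the proof is not complete. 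The available tool (the Owens--Strle generalization of Elkies, Theorem 3.1 of the paper) is weaker in exactly the way that breaks your plan: it produces a \emph{single} characteristic covector of small norm, $Q^*(\xi,\xi) \le r-1+\frac{1}{\delta}$ (resp.\ $r-1$ for even $\delta$), with no control over which coset it lies in. If that minimizing covector sits in a coset with $i < g_4(K)$, your band-restricted lower bound says nothing about it.

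The paper's resolution is to make the lower bound $4d(S^3_n(K),i) \ge \beta(n)$ hold for \emph{every} $i$, not just the central band, so that the global minimum of $Q^*(\xi,\xi)$ over all of $\mathrm{char}(L)$ is at least $r + \beta(n)$ and Owens--Strle applies directly with its built-in rigidity clause. For $i < g_4(K)$ one cannot discard the $-8V_i$ term, but Rasmussen's quantitative bound $V_i \le \lceil (g_4(K)-i)/2\rceil$ lets one trade it against the growth of the lens-space term: the computation
\[
\frac{(2i-n)^2-n}{n} - 8V_i \;\ge\; -1 + \frac{4i^2}{n} + \bigl(n - 4g_4(K) - 4\bigr) \;\ge\; \beta(n)
\]
is exactly where the hypothesis $n > 4g_4(K)+3$ is consumed. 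This is the step missing from your write-up: without it, either you need the unproven band-restricted rigidity statement, or you need the bound in all cosets, and the latter is what the constant $4g_4(K)+3$ is calibrated to deliver. Your instinct that ``pinning down how wide the band must be is the quantitative heart'' is pointing at the right place, but the answer is that the band must be \emph{all} of $\mathbb{Z}/n\mathbb{Z}$, achieved via the $V_i$ estimate rather than via a wider vanishing region.
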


    We note that if $K$ is slice, then we get a contractible four manifold bounding $S^3_{1}(K)$ by surgery along a slice disk for $K$. This shows $S^3_1(K)$ can only bound the Euclidean lattice by Donaldson's theorem. There is a positive definite 2-handle cobordism from $S^3_n(K)$ to $S^3_1(K)$ when $n$ is a positive number. Using this cobordism and Donaldson's theorem, one can prove the previous theorem for slice knots and positive integers by induction on $n$. This heuristic shows if one can classify all the lattices bounded by $S^3_1(K)$, then one can try to classify all the lattices bounded by $S^3_n(K)$ inductively. Indeed, this is the method used in \cite{golla2019definite}
    . However, we use a classification result for non-unimodular lattices similar to the one proved by Elkies in \cite{elkies1999characterization} for the unimodular ones and the proof follows from correction terms in Heegaard Floer homology; our proof is similar to the proof of Donaldson's theorem in \cite{ozsvath2002absolutely}
    by Ozsvath and Szabo.

Now we can use Theorem \ref{Main THM} to define a concordance invariant $l(K)$ for every knot $K$ in $S^3$.
\begin{defi}
    Consider a knot $K$ and let $n$ be a positive integer. We say $S_n^3(K)$ bounds a non-standard lattice if it bounds a lattice $L$ that is not isomorphic to $\langle n \rangle\oplus\langle 1\rangle^{{rk(L)}-1}$.
 \end{defi}
 \begin{defi}
     Let $K$ be a knot in $S^3$. Define
     \begin{equation}
     l(K)\coloneqq\sup\hspace{0.5mm}\{ n : S^3_n(K)\hspace{0.75mm} \text{bounds a non-standard lattice}\}.
     \end{equation}
 \end{defi}
 According to Theorem \ref{Main THM} this is a finite number less than or equal to $4g_4(K)+3$ and it vanishes for slice knots. Let $K$ be a non-trivial $L$-space knot and assume that the $L$-space surgery slopes are negative which can be achieved by mirroring the knot if necessary. Using our main theorem, we prove

 \begin{thm}
 \label{Slope Bounds}
     Suppose $S_{-n}^3(K)$ admits a sharp negative definite filling for some positive integer $n$. Then
     \begin{equation}
         n\leq l(m(K)),
     \end{equation}
     where $m(K)$ denotes the mirror of $K$; in particular, we get $n\leq 4g(K)+3$.
 \end{thm}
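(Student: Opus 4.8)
The plan is to reflect the whole situation across the mirror, thereby trading ``negative definite'' for ``positive definite,'' and then to show that \emph{sharpness} forces the resulting lattice to be non-standard. The starting point is the orientation-reversing diffeomorphism $S^3_{-n}(K)\cong -S^3_n(m(K))$. Given a sharp negative definite filling $W$ of $S^3_{-n}(K)$, reversing orientation produces a positive definite four-manifold $\overline{W}$ with $\partial\overline{W}=S^3_n(m(K))$; after surgering out any torsion in $H_1$ (which changes neither $b_2$ nor the intersection form) I may assume $\overline{W}$ has torsion-free homology, so that $S^3_n(m(K))$ bounds the lattice $L\coloneqq Q_{\overline{W}}$ in the sense of Definition \ref{Lattice Def}. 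It then suffices to prove that $L$ is \emph{non-standard}, for in that case $n$ lies in the set defining $l(m(K))$ and hence $n\le l(m(K))$.

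The second step is to read the correction terms off the lattice, which is exactly what sharpness buys. Recall the Ozsv\'ath--Szab\'o correction-term inequality underlying Theorem \ref{Main THM}: for the negative definite filling $W$ and each spin$^c$ structure $\mathfrak s$ on $S^3_{-n}(K)$,
\[
c_1(\mathfrak t)^2 + b_2(W)\le 4\,d\big(S^3_{-n}(K),\mathfrak s\big),
\]
with equality for some $\mathfrak t$ extending $\mathfrak s$ precisely because $W$ is sharp. Rewriting $c_1(\mathfrak t)^2$ through the associated positive definite lattice $L$ and using $d(S^3_{-n}(K),\mathfrak s)=-\,d(S^3_n(m(K)),\mathfrak s)$, this says that in every spin$^c$ class the minimal square of a characteristic covector of $L$ equals $rk(L)+4\,d(S^3_n(m(K)),\mathfrak s)$. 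In other words, a sharp filling is exactly one whose characteristic-covector data \emph{computes} (rather than merely bounds) the $d$-invariants of its boundary.

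Now suppose toward a contradiction that $L\cong\langle n\rangle\oplus\langle 1\rangle^{rk(L)-1}$. The unknot surgery $L(n,1)=S^3_n(U)$ bounds precisely this standard lattice, and that filling is itself sharp, so the same minimal-covector quantity for the standard lattice computes $d(L(n,1),\mathfrak s)$. Comparing with the previous paragraph forces $d(S^3_n(m(K)),\mathfrak s)=d(L(n,1),\mathfrak s)$ for every $\mathfrak s$. But the large-surgery (mapping-cone) formula gives
\[
d\big(S^3_n(m(K)),i\big)=d\big(L(n,1),i\big)-2\,V_{\min(i,\,n-i)}(m(K)),
\]
and since $m(K)$ is a nontrivial $L$-space knot its invariants $V_i(m(K))$ are not all zero (indeed $V_0(m(K))>0$). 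This contradicts the equality at that spin$^c$ structure, so $L$ must be non-standard, and therefore $n\le l(m(K))$. For the ``in particular'' clause, apply Theorem \ref{Main THM} to $m(K)$ to get $l(m(K))\le 4g_4(m(K))+3$; since the slice genus is mirror-invariant and $L$-space knots satisfy $g_4=g$, the right side equals $4g(K)+3$, giving $n\le 4g(K)+3$.

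The step I expect to be the main obstacle is the bookkeeping connecting the definition of sharpness to the lattice-theoretic statement about minimal characteristic covectors in each spin$^c$ class: one must track the orientation signs carefully under $W\mapsto\overline{W}$ and verify that the standard lattice is genuinely the sharp filling of $L(n,1)$, so that its covector data reproduces $d(L(n,1),\mathfrak s)$ exactly. Matching the spin$^c$ labels on $S^3_{-n}(K)$, on $S^3_n(m(K))$, and on the two lattices along the way is where the argument is most delicate; the remaining input, namely $V_0(m(K))>0$ for a nontrivial $L$-space knot, is standard.
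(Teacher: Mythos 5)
Your overall strategy is the same as the paper's: reverse orientation to obtain a positive definite filling of $S^3_n(m(K))$, show that sharpness together with the non-triviality of $K$ forces the resulting lattice to be non-standard, and then conclude via the definition of $l$ and Theorem \ref{Main THM}. The paper isolates the middle step as Lemma \ref{int form of sharp fillings} and derives the contradiction from the Casson--Walker invariant (a standard sharp filling forces $\Delta_K''(1)=0$, and the only $L$-space knot with $\Delta_K''(1)=0$ is the unknot), whereas you derive it from $V_0(m(K))>0$ via the Ni--Wu formula. Your route is legitimate and somewhat more self-contained, but as written it has one genuine gap.

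The gap is exactly the step you flag at the end. From sharpness of the two fillings of the same lattice $L\cong\langle n\rangle\oplus\langle 1\rangle^{r-1}$ you may conclude only that there is \emph{some} affine bijection $\sigma$ of spin$^c$ structures with $d(S^3_n(m(K)),\sigma(i))=d(L(n,1),i)$: the two identifications of the boundary spin$^c$ structures with $L^*/L$ come from different four-manifolds and need not agree with the standard labeling $\rho$ used in the surgery formula. So the asserted pointwise equality $d(S^3_n(m(K)),i)=d(L(n,1),i)$ at the particular class $i=0$ where $V_0$ appears is not justified, and your contradiction evaporates if $\sigma$ permutes the labels. The fix is one line: the multiset equality gives $\sum_i d(S^3_n(m(K)),i)=\sum_i d(L(n,1),i)$, while Ni--Wu gives $\sum_i d(S^3_n(m(K)),i)=\sum_i d(L(n,1),i)-2\sum_i\max(V_i,V_{n-i})$, forcing every $V_i(m(K))$ to vanish and in particular $V_0(m(K))=0$, which contradicts non-triviality of the $L$-space knot $m(K)$. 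This summation over spin$^c$ structures is precisely what the paper's Casson--Walker argument accomplishes, since $\lambda$ is computed from the sum of the correction terms and is therefore independent of the labeling. With that line added, your proof is complete.
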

 \begin{rem}
     This generalizes the main theorem in \cite{Rasmussen_2004} since if we assume $S^3_{-n}(K)$ is a lens space, then it has a sharp negative definite filling and our theorem implies $n$ must be less than or equal to $4g(K)+3$.
 \end{rem}
 \begin{rem}
     Note that $S^3_{-n}(K)$ with the reversed orientation is the same as $S^3_n(m(K))$; this three manifold might have a sharp negative definite filling for arbitrary large $n$ and the theorem does not hold for positive surgeries; see Theorem $1.2$ in \cite{McCoy_2021}.
 \end{rem}
 
\section*{Acknowledgments}
The author is grateful to his advisors, John Baldwin and Josh Greene, for their invaluable guidance, support, and insightful conversations about this work.
He would also like to express his gratitude to Christopher Scaduto for an informative discussion on this work. 
\section{Surgery Formula for Correction Terms}
We assume the reader is familiar with Heegaard Floer homology and Knot Floer homology as explained in \cite{rasmussen2003floer} and \cite{ozsvath2003holomorphic}.
Let $K$ be a knot in $S^3$ and $CFK^{\infty}(K)$ denote its $\mathbb{Z}\oplus\mathbb{Z}$ filtered knot complex. There are two filtrations on this complex that we denote by $i$ and $j$. Let $B^{+}$ be the quotient complex corresponding to all the elements with $i\geq 0$ and let $A_s$ denote the quotient complex corresponding to all the elements with $\max(i,j-s)\geq 0$. The complex $B^+$ is chain homotopic to $CF^{+}(S^3)$ and the large surgery formula realizes each $A_s$ as $CF^+(Y,\mathfrak{t})$ where $Y$ is a large surgery along $K$ and $\mathfrak{t}$ is a Spin$^{\mathbb{C}}$ structure on $Y$. In particular, we have
$$
H(B^+)\cong \mathcal{T}^+_{0}
$$
where $\mathcal{T}^+$ is the $\mathbb{F}[U]$-module $\mathbb{F}[U,U^{-1}]/U\cdot\mathbb{F}[U]$ and $\mathcal{T}^+_{0}$ means $1$ is supported in grading $0$. We also have
$$
H(A_s)\cong \mathcal{T}^{+}\oplus M
$$
where $M$ is a $\mathbb{F}[U]$-torsion module.
There are natural chain maps $v_s\colon A_s\to B^
+$ defined by mapping the generators with $i<0$ in $A_s$ to $0$. The induced maps in homology take the tower part of $H(A_s)$ to the tower part of $H(B^+)$. Since this is a $U$-equivariant map from a tower to another one, it has to be multiplication by a power of $U$; we denote this power by $V_s$. In \cite{rasmussen2003floer}, Rasmussen proved for each $K$ and $i$, we have
$$
V_i(K)-1\leq V_{i+1}(K)\leq V_i(K).
$$
Rasmussen also proved
\begin{equation}
\label{v i bound}
    V_i(K)\leq \ceil[\bigg] {\frac{g_4(K)-i}{2}}
\end{equation}
for $0 \leq i< g_4(K)$ and $V_i(K)=0$ for $i\geq g_4(K)$.
Let $n$ be a positive integer and consider the natural $2$-handle cobordism from $S^3$ to $S^3_n(K)$; we denote this cobordism by $X_n(K)$. Fix a Seifert surface $\Sigma$ for $K$ and cap it off with a disk in $X_n(K)$. We call the resulting closed surface $\Tilde{\Sigma}$ and define a map $\rho\colon\text{Spin}^{\mathbb{C}}(S^3_n(K))\to \mathbb{Z}/n\mathbb{Z}$ using this closed surface. Consider a Spin$^{\mathbb{C}}$ structure $\mathfrak{t}$ on $S^3_n(K)$ and extend it to a Spin$^{\mathbb{C}}$ structure $\mathfrak{s}$ on $X_n(K)$. We have
$$
\langle c_1(\mathfrak{s}),\Tilde{\Sigma}\rangle-n \equiv 2i \hspace{2mm} \mod 2n.
$$
Define $\rho(\mathfrak{t})\coloneqq i$. One can show this is independent of the choice of $\mathfrak{s}$ and the Seifert surface $\Sigma$; it is also a bijection and we will use $\rho$ to identify Spin$^{\mathbb{C}}(S^3_n
(K))$ with $\mathbb{Z}/n\mathbb{Z}$. The following was proved by Ni and Wu in \cite{ni2013cosmetic}.
\begin{prop}
\label{d invt formula}
    Suppose $K$ is a knot in $S^3$ and $p,q$ are positive numbers. Then for $0\leq i\leq p-1$, we have
    \begin{equation}
    d(S_{\frac{p}{q}}^3(K),i)=d(L(p,q),i)-2\max(V_{\floor{\frac{i}{q}}},V_{\floor{\frac{p+q-1-i}{q}}}),
    \end{equation}
    where $L(p,q)$ is the $\frac{p}{q}$ surgery on the unknot.
\end{prop}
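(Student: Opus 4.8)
The plan is to invoke the Ozsv\'ath--Szab\'o rational surgery mapping cone formula and read off each correction term as the bottom grading of the surviving tower, comparing the knot complex against the unknot complex term by term. First I would recall that for $p/q>0$ the group $HF^+(S^3_{p/q}(K))$ is computed by the mapping cone $\mathbb{X}^+(p/q)$ of a map
$$D^+_{p/q}\colon \bigoplus_{s\in\mathbb{Z}} A_{\floor{s/q}} \to \bigoplus_{s\in\mathbb{Z}} B^+,$$
whose components are the maps $v_{\floor{s/q}}$ and $h_{\floor{s/q}}$; on the tower $\mathcal{T}^+\subset H(A_s)$ these act as $U^{V_s}$ and $U^{H_s}$, where $H_s=V_{-s}$ by the conjugation symmetry of $CFK^{\infty}(K)$. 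The Spin$^{\mathbb{C}}$ decomposition sends the structure labelled $i$ (with $0\leq i\leq p-1$) to the subcone indexed by those $s$ with $s\equiv i \pmod p$, which is an infinite zig-zag of the form $\cdots B^+ \xleftarrow{v} A \xrightarrow{h} B^+ \cdots$.

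Next I would truncate this zig-zag. Since $V_s=0$ for $s\geq g_4(K)$ and $H_s=V_{-s}=0$ for $s\leq -g_4(K)$, the $v$ maps at the positive end and the $h$ maps at the negative end are isomorphisms on the tower; cancelling these acyclic pieces reduces the cone to a finite complex whose homology has tower part a single $\mathcal{T}^+$, as it must since $S^3_{p/q}(K)$ is a rational homology sphere. Setting $K$ equal to the unknot makes every $V_s=H_s=0$ and turns $\mathbb{X}^+(p/q)$ into the cone computing $HF^+(L(p,q))$; thus the bottom grading of the truncated unknot cone is by definition $d(L(p,q),i)$, and the computation for a general $K$ is naturally phrased as a grading shift away from this baseline.

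The heart of the argument is then a grading computation. The bottom of the surviving tower is pinned at the copy of $B^+$ at position $s=i$, and its grading is governed by the two maps feeding that copy: $v$ from $A_{\floor{i/q}}$ (a factor $U^{V_{\floor{i/q}}}$) and $h$ from $A_{\floor{(i-p)/q}}$ (a factor $U^{H_{\floor{(i-p)/q}}}$). Tracking gradings through these two powers shows that the bottom generator sits below the unknot baseline by twice the \emph{larger} of the two exponents, namely $2\max\bigl(V_{\floor{i/q}},\,H_{\floor{(i-p)/q}}\bigr)$. Finally I would convert the horizontal term to a vertical one: $H_{\floor{(i-p)/q}}=V_{-\floor{(i-p)/q}}=V_{\ceil{(p-i)/q}}$, and the elementary identity $\ceil{a/q}=\floor{(a+q-1)/q}$ gives $V_{\floor{(p+q-1-i)/q}}$, yielding the stated formula.

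The step I expect to be the main obstacle is making this localization rigorous: a priori the surviving tower threads through infinitely many copies of $B^+$, and the grading drop is an extremum over \emph{all} the exponents $V_{\floor{s/q}}$ and $H_{\floor{s/q}}$ appearing in the cone. The work is to show, using the monotonicity $V_s\geq V_{s+1}\geq V_s-1$ (so that $V_{\floor{s/q}}$ is non-increasing and $H_{\floor{s/q}}$ non-decreasing in $s$), that this extremum collapses to the two competing terms attached to the copy of $B^+$ at position $i$, while keeping the absolute grading shifts of the mapping cone consistent with the normalization in which the unknot recovers $d(L(p,q),i)$.
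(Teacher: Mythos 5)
The paper gives no proof of this proposition; it is quoted directly from Ni--Wu \cite{ni2013cosmetic}, so there is no internal argument to compare against. Your outline is essentially Ni and Wu's own proof: the rational surgery mapping cone, truncation via the vanishing of $V_s$ for $s\geq g$ and $H_s=V_{-s}$ for $s\leq -g$, normalization against the unknot cone to recover $d(L(p,q),i)$, and the conversion $H_{\floor{(i-p)/q}}=V_{\floor{(p+q-1-i)/q}}$ are exactly the ingredients they use. The one step you leave open---that the grading drop of the bottom of the surviving tower is the maximum of only the two exponents feeding the copy of $B^+$ at position $i$, rather than an extremum over the entire zig-zag---is indeed where all the work lies; Ni and Wu close it by identifying the tower of the homology of the cone with the kernel of $D^+_{p/q}$ restricted to towers and exploiting precisely the monotonicity of $V_{\floor{s/q}}$ and $H_{\floor{s/q}}$ that you cite, so your plan is sound as stated.
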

\begin{rem}
    The affine identification between Spin$^{\mathbb{C}}(S^3_{\frac{p}{q}}(K))$ and $\mathbb{Z}/p\mathbb{Z}$ is slightly different from the one given for integer surgeries when $q$ is greater than one, but we only need the case of integer surgeries in this note.
\end{rem}
\section{A Characterisation of Non-Unimodular Definite Lattices}
Consider a lattice $L$ and let $Q$ denote the pairing on $L$. We can extend this paring to $L\otimes \mathbb{Q}$ by
$$
Q^*(x\otimes p, y\otimes q)\coloneqq pqQ(x,y).
$$
Define the dual lattice $L^*\subset L\otimes \mathbb{Q}$ by
$$
L^*=\{x\otimes p: Q^*(x\otimes p,y)\in \mathbb{Z}\hspace{2mm}\forall y\in L\}.
$$
There is a natural inclusion from $L$ to its dual and we call $L^*/L$ the discriminant group of $L$. We define
$det(L)$ to be  $|L^*/L|$.
An element $\xi$ in $L^*$ is called a characteristic covector if 
$$
Q(x,x)\equiv Q(x,\xi)\hspace{2mm} \mod 2
$$
for every $x$ in $L$. We denote the set of characteristic covectors by $char(L)$.
The following was proved by Owens and Strle in \cite{owens2008characterisation}; it is a generalization of Elkies's theorem for unimodular lattices in \cite{elkies1999characterization}.
\begin{thm}
\label{lattice char}
    Let $L$ be a positive definite lattice of rank $r$ and determinant $\delta$. Then there exists a characteristic covector $\xi$ in $L^*$ with 
    \begin{equation}
    Q^*(\xi,\xi)\leq 
       \begin{cases}
            r-1+\frac{1}{\delta}\hspace{3mm} \text{if $\delta$ is odd}&\\
            r-1 \hspace{7mm} \text{if $\delta$ is even;}
       \end{cases} 
    \end{equation}
    this inequality is strict unless $L\cong (r-1)\langle 1\rangle\oplus \langle \delta\rangle$. Moreover, the two sides of the inequality are congruent modulo $\frac{4}{\delta}$. 
\end{thm}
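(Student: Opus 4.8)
The plan is to reduce everything to the behaviour of a single shortest characteristic covector and then induct on the rank $r$, peeling off $\langle 1\rangle$ summands. First I would record two structural facts. The map $x\mapsto Q(x,x)\bmod 2$ is linear on $L/2L$, so $char(L)$ is a nonempty coset $\xi_0+2L^*$. Fix $\xi\in char(L)$ of minimal norm $Q^*(\xi,\xi)$. Comparing $\xi$ with $\xi\pm 2\ell$ for $\ell\in L$ (these remain characteristic) and using minimality yields
\[
|Q^*(\xi,\ell)|\le Q(\ell,\ell)\qquad\text{for all }\ell\in L,
\]
which is the engine for both the bound and the rigidity statement.

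Next I would prove the inequality itself by induction on $r$, writing $f(r,\delta)=r-1+\tfrac1\delta$ for odd $\delta$ and $f(r,\delta)=r-1$ for even $\delta$, and noting $f(r,\delta)=1+f(r-1,\delta)$. If $L$ contains a vector $w$ with $Q(w,w)=1$, then $\langle w\rangle$ splits off orthogonally, $L\cong\langle w\rangle\oplus w^{\perp}$ with $w^{\perp}$ of rank $r-1$ and determinant $\delta$; since characteristic covectors of an orthogonal sum are exactly sums of characteristic covectors of the summands and norms add, the minimal covector norm of $L$ is $1$ plus that of $w^{\perp}$, and the inductive hypothesis delivers the bound $f(r,\delta)$. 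This reduces the inequality to lattices with no norm-one vector. If such an $L$ is even then $0\in char(L)$ and the bound is immediate; the remaining, genuinely hard, case is an odd lattice of rank $r\ge 2$ with minimal vector norm $\ge 2$, where I must show the \emph{strict} inequality $Q^*(\xi,\xi)<f(r,\delta)$. This is the main obstacle and is exactly the content generalizing Elkies: here I would run an averaging/counting argument over the characteristic coset, feeding in $|Q^*(\xi,\ell)|\le Q(\ell,\ell)$ together with the absence of norm-one vectors to show the shortest covector strictly undershoots the value attained by the model lattice.

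For the equality clause, one direction is a direct computation: on $(r-1)\langle1\rangle\oplus\langle\delta\rangle$ the shortest characteristic covector takes the value $\pm1$ on each $\langle1\rangle$ factor and, for odd $\delta$, a generator scaled by $\tfrac1\delta$ on the $\langle\delta\rangle$ factor (and $0$ for even $\delta$), giving norm exactly $f(r,\delta)$. Conversely, if equality holds then by the strict bound above $L$ must contain a norm-one vector $w$ when $r\ge2$; splitting $L\cong\langle w\rangle\oplus w^{\perp}$ and forcing equality in the inductive step makes $w^{\perp}\cong(r-2)\langle1\rangle\oplus\langle\delta\rangle$, whence $L\cong(r-1)\langle1\rangle\oplus\langle\delta\rangle$, the base case $r=1$ being $L=\langle\delta\rangle$.

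Finally, for the congruence I would use the coset structure. Since $\delta L^*\subseteq L$, any pairing of dual vectors lies in $\tfrac1\delta\mathbb{Z}$, so for $m\in L^*$,
\[
Q^*(\xi+2m,\xi+2m)-Q^*(\xi,\xi)=4Q^*(\xi,m)+4Q^*(m,m)\in\tfrac{4}{\delta}\mathbb{Z}.
\]
Thus all characteristic covector norms of $L$ share one residue modulo $\tfrac4\delta$. That this residue equals $f(r,\delta)$ propagates through the same norm-one induction (each split shifts both the norm and $f$ by $1$), with the base case $\langle\delta\rangle$ giving residue $\tfrac1\delta$ for odd $\delta$ and $0$ for even $\delta$, using that odd squares are $\equiv1\bmod 8$; for lattices not reached by norm-one splitting the residue is instead pinned down by the discriminant-form (Gauss--Milgram) computation. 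I expect the averaging estimate in the no-norm-one-vector case to be the crux, with the induction, the equality analysis, and the congruence all being bookkeeping around it.
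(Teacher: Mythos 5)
First, a point of comparison: the paper does not prove Theorem \ref{lattice char} at all --- it is quoted from Owens and Strle \cite{owens2008characterisation}, so your attempt can only be measured against the proof in that reference, which (like Elkies's original argument in the unimodular case $\delta=1$) runs through theta series and modular forms.

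Your structural preliminaries are all correct: $char(L)$ is a coset of $2L^*$; minimality of $\xi$ gives $|Q^*(\xi,\ell)|\le Q(\ell,\ell)$ for all $\ell\in L$; a norm-one vector splits off an orthogonal $\langle 1\rangle$ summand and characteristic covectors of an orthogonal sum decompose as sums of characteristic covectors of the summands; the model lattice computation gives exactly $f(r,\delta)$; and your observation that any two characteristic covectors have norms differing by an element of $\frac{4}{\delta}\mathbb{Z}$ is right. The problem is that after these reductions you land on precisely the statement that \emph{is} the theorem --- that an odd positive definite lattice with no norm-one vectors has a characteristic covector of norm strictly less than $f(r,\delta)$ --- and there you offer only that you ``would run an averaging/counting argument over the characteristic coset.'' That is a placeholder, not an argument, and the inputs you propose to feed it cannot suffice: the inequality $|Q^*(\xi,\ell)|\le Q(\ell,\ell)$ holds for every lattice and every minimal characteristic covector, and the absence of norm-one vectors is a purely local condition, whereas the conclusion is a global statement about the lattice. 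This is exactly why Elkies, and Owens--Strle after him, had to form the theta series of $L$ and of the coset $char(L)$, identify them as modular forms of weight $r/2$ for a suitable congruence subgroup, and compare Fourier coefficients with those of the model lattice; no elementary averaging over the coset is known to replace that step even when $\delta=1$. The gap then propagates: your equality analysis invokes the (unproved) strict bound to force a norm-one vector when $r\ge 2$, and the last step of your congruence argument (``pinned down by the discriminant-form (Gauss--Milgram) computation'') is again an appeal to an identity you have not stated or verified. In short, the bookkeeping around the crux is fine, but the crux itself is missing.
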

\begin{rem}
    This theorem implies if $L$ is a lattice of rank $r$ and 
odd determinant $\delta$ with
    $$
    r-1+\frac{1}{\delta}\leq \min_{\xi\in char(L)}Q^*(\xi,\xi),
    $$
    then $L$ is isomorphic to $(r-1)\langle 1\rangle\oplus \langle \delta\rangle$. The analogous result holds for lattices with even determinant with $r-1$ in place of $r-1+\frac{1}{\delta}$.
\end{rem}
\section{Classification of Lattices Bounded by Large Surgeries}
Before stating the proof of Theorem \ref{Main THM}, we need some preliminaries. Let $X$ be a four-manifold with boundary a rational homology sphere $Y$. Writing the long exact sequence for singular homology with integer coefficients, we get
\begin{equation}
\label{lES}
0\rightarrow H_2(X)\rightarrow H_2(X,Y)\rightarrow H_1(Y)\rightarrow H_1(X).
\end{equation}
If we assume the homology of $X$ has no torsion, then $Q_X$ is defined on $H_2(X)\times H_2(X)$ and Poincare-Lefschetz duality proves $H_2(X,Y)$ is isomorphic to the dual of this lattice; since there is no torsion in homology of $X$, the last arrow in the long exact sequence is zero and this proves 
$$
H_1(Y)\cong L^*/L
$$
where $L$ denotes $H_2(X)$. In particular, we get $|H_1(Y)|=det(L)$.
\\
The following was proved by Ozsvath and Szabo in \cite{ozsvath2002absolutely}.
\begin{thm}
\label{d-invt ineq}
    Suppose $X$ is a compact oriented positive definite four-manifold with boundary a rational homology sphere $Y$, and $\mathfrak{s}$ is a Spin$^\mathbb{C}$ structure on $X$. Then
    \begin{equation}
        -4d(Y,\mathfrak{t})\geq b_2(X)-c_1(\mathfrak{s})^2,
    \end{equation}
    where $\mathfrak{t}$ is the restriction of $\mathfrak{s}$ to $Y$ and $c_1(\mathfrak{s})$ denotes the first Chern class of $\mathfrak{s}$.
\end{thm}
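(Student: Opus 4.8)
The plan is to deduce the inequality from the behaviour of the four-dimensional cobordism maps on Heegaard Floer homology, mirroring the argument Ozsv\'ath and Szab\'o use in \cite{ozsvath2002absolutely} to reprove Donaldson's theorem. The cobordism argument is cleanest when $b_2^+$ vanishes, so I first pass to the reversed manifold $\bar X$ (that is, $X$ with the opposite orientation), which is negative definite with $\partial \bar X = -Y$. Reversing orientation negates the intersection form, so $c_1(\mathfrak{s})^2$ computed in $\bar X$ equals $-c_1(\mathfrak{s})^2$, while $b_2(\bar X) = b_2(X)$, $\sigma(\bar X) = -b_2(X)$, and $d(-Y,\mathfrak{t}) = -d(Y,\mathfrak{t})$. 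Hence it suffices to prove $-c_1(\mathfrak{s})^2 + b_2(X) \leq 4\,d(-Y,\mathfrak{t})$, which rearranges at once to the stated inequality.

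For this reduced statement I would puncture $\bar X$ to obtain a cobordism $W = \bar X \setminus \mathrm{int}(B^4)$ from $S^3$ to $-Y$, carrying the restriction of $\mathfrak{s}$. It induces maps $F^{\circ}_{W,\mathfrak{s}}$ on each flavour $HF^{\circ}$, each homogeneous of degree $\frac{c_1(\mathfrak{s})^2 - 2\chi(W) - 3\sigma(W)}{4}$. Since $\chi(W) = \chi(\bar X) - 1 = b_2(X)$ and $\sigma(W) = \sigma(\bar X) = -b_2(X)$ (using $b_1(X) = b_3(X) = 0$), and since $c_1(\mathfrak{s})^2$ in $\bar X$ is $-c_1(\mathfrak{s})^2$, this degree shift equals $\frac{-c_1(\mathfrak{s})^2 + b_2(X)}{4}$, which is exactly the quantity I want to bound above by $d(-Y,\mathfrak{t})$.

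The heart of the argument is a grading chase in the commutative square relating $HF^\infty$ and $HF^+$ under $F_{W,\mathfrak{s}}$. The decisive input is that, because $b_2^+(W) = 0$, the map $F^\infty_{W,\mathfrak{s}} \colon HF^\infty(S^3) \to HF^\infty(-Y,\mathfrak{t})$ is an isomorphism of $\mathbb{F}[U,U^{-1}]$-modules; this is where negative-definiteness is genuinely used, and I expect it to be the main obstacle, since it rests on the computation of $HF^\infty$ of a rational homology sphere as a single ``standard'' tower together with the vanishing of $HF^\infty$-cobordism maps once $b_2^+ > 0$. Granting it, let $y$ generate the bottom of the tower of $HF^+(-Y,\mathfrak{t})$, so that $\deg y = d(-Y,\mathfrak{t})$ and $y = \pi(\tilde y)$ for some $\tilde y \in HF^\infty(-Y,\mathfrak{t})$ of the same degree. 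Transporting $\tilde y$ back through the isomorphism yields $\tilde z = (F^\infty_{W,\mathfrak{s}})^{-1}(\tilde y) \in HF^\infty(S^3)$ in degree $d(-Y,\mathfrak{t})$ minus the shift, and commutativity of the square forces $F^+_{W,\mathfrak{s}}(\pi(\tilde z)) = y \neq 0$. Hence $\pi(\tilde z) \neq 0$ in $HF^+(S^3) \cong \mathcal{T}^+_0$, which is supported in non-negative degrees, so $\deg \tilde z \geq 0$; unwinding, $d(-Y,\mathfrak{t}) \geq \frac{-c_1(\mathfrak{s})^2 + b_2(X)}{4}$, which is precisely the reduced inequality.

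A handful of routine points would still need attention: that $\mathfrak{t}$ extends over $X$ so the statement is non-vacuous, that the degree-shift formula and the functoriality and composition laws for cobordism maps can be quoted from \cite{ozsvath2002absolutely}, and that the Euler characteristic and signature computations behave as claimed under puncturing and orientation reversal. The single substantive ingredient, and the step on which I would concentrate, remains the isomorphism property of $F^\infty_{W,\mathfrak{s}}$ for cobordisms with $b_2^+ = 0$.
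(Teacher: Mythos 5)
The paper does not prove this statement at all---it is quoted verbatim as a known result of Ozsv\'ath and Szab\'o (Theorem 9.6 of \cite{ozsvath2002absolutely}), so there is no internal proof to compare against. Your argument is a correct reconstruction of the proof in that reference: orientation reversal to the negative definite setting, the degree-shift computation for the punctured cobordism $W$ from $S^3$ to $-Y$, and the grading chase through the $HF^\infty$--$HF^+$ square, with the isomorphism of $F^\infty_{W,\mathfrak{s}}$ for $b_2^+(W)=0$ (Proposition 9.4 in \cite{ozsvath2002absolutely}) as the essential input, exactly as you identify. The one point you gloss over is the parenthetical ``using $b_1(X)=b_3(X)=0$'': the theorem as stated makes no such hypothesis, and both the Euler characteristic computation and the isomorphism property of $F^\infty_{W,\mathfrak{s}}$ require $b_1(W)=0$. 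This is repaired, as in \cite{ozsvath2002absolutely}, by first surgering out embedded loops generating $H_1(X)/\mathrm{Tors}$, which changes neither the boundary nor the intersection form nor the square of $c_1(\mathfrak{s})$; with that reduction made explicit, your proof is complete.
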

\begin{rem}
    If $\mathfrak{s}$ is a Spin$^\mathbb{C}$ structure on $X$, then $c_1(\mathfrak{s})$ is in $H^2(X)\cong H_2(X,Y)$ and the mod $2$ reduction of $c_1(\mathfrak{s})$ is the second Stiefel Whitney class of $X$. Hence, $c_1(\mathfrak{s})$ is a characteristic covector of the intersection form of $X$ and $c_1(\mathfrak{s})^2$ denotes $Q_X^*(c_1(\mathfrak{s}),c_1(\mathfrak{s}))$.
\end{rem}
By Proposition \ref{d invt formula}, we can write
$$
d(S^3_n(K),i)=d(L(n,1),i)-2\max(V_i,V_{n-i})
$$
for every knot $K$ in $S^3$ and positive integer $n$. The correction terms for $L(n,1)$ are given by
$$
d(L(n,1),i)=\frac{(2i-n)^2-n}{4n}
$$
for $0\leq i\leq n-1$.
Thus we get
$$
d(S^3_n(K),i)=\frac{(2i-n)^2-n}{4n}-2\max(V_i,V_{n-i}).
$$
If $g_4(K)\leq\min(i,n-i)$, then both $V_i$ and $V_
{n-i}$ are zero by equation \eqref{v i bound} and we get
$$
d(S^3_n(K),i)=\frac{(2i-n)^2-n}{4n}.
$$
Now define
\begin{equation*}
\beta(n)\coloneqq
    \begin{cases}
        \frac{1}{n} -1 &\hspace{2mm} n=1 \hspace{2mm} \mod 2,\\
        -1 &\hspace{2mm} n=0 \hspace{2mm} \mod 2.
    \end{cases}
\end{equation*}
We conclude that 
$$
\beta(n)\leq 4d(S^3_n(K),i)
$$
for every $i$ with $g_4(K)\leq\min(i,n-i)$ and every knot $K$ in $S^3$.
\begin{lem}
\label{bounding lemma}
    Suppose $K$ is a knot in $S^3$ and $n$ is a positive integer greater than $4g_4(K)+3$. Then 
    \begin{equation}
        \label{beta ineq}
        \beta(n)\leq 4d(S^3_n(K),i)
    \end{equation}
    for every $0\leq i\leq n-1$.
\end{lem}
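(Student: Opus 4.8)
The plan is to reduce the claimed $d$-invariant inequality to a purely arithmetic estimate, using Rasmussen's bound \eqref{v i bound} together with the explicit surgery formula recorded above. Write $g=g_4(K)$ and recall that
\[
4d(S^3_n(K),i)=\frac{(2i-n)^2-n}{n}-8\max(V_i,V_{n-i}).
\]
Since this expression is invariant under $i\mapsto n-i$ and $\beta(n)$ is independent of $i$, it suffices to treat $0\le i\le n/2$. If moreover $i\ge g$, then $n-i\ge i\ge g$, so both $V_i$ and $V_{n-i}$ vanish; this is exactly the case already settled before the statement of the lemma. Hence the only new range is $0\le i<g$. There $i\le g-1$ forces $n-i\ge n-g+1>3g+4>g$, so $V_{n-i}=0$ and $\max(V_i,V_{n-i})=V_i$.

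Setting $f(i):=\frac{(2i-n)^2-n}{n}$, the inequality to prove reads $8V_i\le f(i)-\beta(n)$. First I would evaluate the right-hand side using the exact value of $\beta$; a short computation gives
\[
f(i)-\beta(n)=
\begin{cases}
\dfrac{(2i-n)^2-1}{n} & n\ \text{odd},\\[1mm]
\dfrac{(2i-n)^2}{n} & n\ \text{even}.
\end{cases}
\]
Next I would apply $V_i\le\lceil (g-i)/2\rceil\le (g-i+1)/2$, so that $8V_i\le 4(g-i+1)$. It therefore suffices to show $4(g-i+1)\le\frac{(2i-n)^2-1}{n}$ when $n$ is odd, and $4(g-i+1)\le\frac{(2i-n)^2}{n}$ when $n$ is even.

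The heart of the argument is the single identity obtained by clearing denominators. Setting $h(i):=(n-2i)^2-4n(g-i+1)$, expansion gives $h(i)=4i^2+n(n-4g-4)$. Since $n>4g+3$ forces $n-4g-4\ge 0$, we get $h(i)\ge 0$ for all $i$, which is precisely the even-$n$ inequality. For odd $n$ one needs the strict bound $h(i)\ge 1$: when $i\ge 1$ the term $4i^2\ge 4$ already yields this, and when $i=0$ oddness forces $n\ge 4g+5$, whence $n-4g-4\ge 1$ and $h(0)=n(n-4g-4)\ge n\ge 1$. This closes the reduction and hence the lemma.

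The delicate point, and the only place the hypothesis $n>4g+3$ is used sharply, is the corner $i=0$, $n=4g+4$, where $h(0)=0$ and the estimate holds with equality. It is for this reason that the argument must track the parity of $n$ (which enters only through $\beta$) and must use the ceiling form of Rasmussen's bound rather than a cruder linear estimate, since a naive bound fails precisely at this borderline. The clean factorization $h(i)=4i^2+n(n-4g-4)$ is what makes the sign, and the borderline case, transparent.
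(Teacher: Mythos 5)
Your proof is correct and follows essentially the same route as the paper's: reduce by the symmetry $i\mapsto n-i$ to the range $0\le i<g_4(K)$, note $V_{n-i}=0$ there, bound $V_i$ by $\lceil (g-i)/2\rceil\le (g-i+1)/2$ via Rasmussen's inequality, and finish with the same algebraic identity (the paper writes it as $-1+\tfrac{4i^2}{n}+(n-4g_4(K)-4)\ge\beta(n)$, which is your $h(i)\ge 0$ resp.\ $h(i)\ge 1$). The only difference is that you spell out the parity case analysis and the borderline $i=0$, $n=4g+4$ check that the paper leaves implicit in its final ``$\ge\beta(n)$'' step.
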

\begin{proof}
    We know the inequality holds for $i$ with $g_4(K)\leq\min(i,n-i)$; assume $g_4(K)>\min(i,n-i)$. Since
    $$
    d(S^3_n(K),i)=d(S^3_n(K),n-i),
    $$
    without loss of generality, we can assume $0\leq i<g_4(K)$. Using Proposition \ref{d invt formula} and equation \eqref{v i bound}, we have
    \begin{align*}
        &4d(S^3_n(K),i)=\frac{(2i-n)^2-n}{n}-8\max(V_i,V_{n-i})=\\
        &\frac{(2i-n)^2-n}{n}-8V_i\geq \frac{(2i-n)^2-n}{n}-8\big(\frac{g_4-i+1}{2}\big )\\
        &=-1+\frac{4i^2}{n}+(n-4g_4(K)-4)\geq \beta(n).
    \end{align*}
\end{proof}
\begin{proof}[\textbf{Proof of Theorem \ref{Main THM}}]
Let $K$ be a knot in $S^3$ and consider an integer $n$ greater than $4g_4(K)+3$. Suppose $S^3_n(K)$ bounds a four-manifold $X$ with no torsion in its homology and intersection form given by a lattice $L$. By the long exact sequence in \eqref{lES}, we get
$$
det(L)=n.
$$
Moreover, every $\xi$ in $char(L)$ corresponds to a spin$^\mathbb{C}$ structure $\mathfrak{s}$ on $X$ and we have
\begin{equation*}
    -4d(S^3_n(K),\mathfrak{t})\geq rk(L)-Q_X^*(\xi,\xi)
\end{equation*}
by Theorem \ref{d-invt ineq}. Rewriting this inequality and using equation \eqref{beta ineq}, we get
\begin{equation*}
    Q_X^*(\xi,\xi)\geq rk(L)+\beta(det(L))
\end{equation*}
for every characteristic covector of $L$.
We conclude $L$ is isomorphic to $\langle 1\rangle ^{rk(L)-1}\oplus \langle n \rangle$ by Theorem \ref{lattice char}.
\end{proof}
\begin{rem}
    If one assumes $X$ has torsion in its homology, but $n$ is square free, then the last arrow in \eqref{lES} vanishes and $H_1(Y)$ becomes isomorphic to the discriminant group of the lattice $Q_X$ where the intersection form is defined on $H_2(X)/Tor$. Therefore, we get the following corollary. 
\end{rem}
\begin{cor}
    Let $K$ be a knot in $S^3$ and $n$ a square free integer greater than $4g_4(K)+3$. Suppose a positive definite four-manifold $X$ bounds $S^3_n(K)$. Then
    $$
    Q_X\cong \langle 1\rangle^{b_2(X)-1}\oplus\langle n\rangle.
    $$
\end{cor}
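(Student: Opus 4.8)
The plan is to run the argument of Theorem~\ref{Main THM} essentially verbatim, but on the lattice $L := H_2(X;\mathbb{Z})/\mathrm{Tor}$ rather than on $H_2(X)$ itself; the only genuinely new input is to show that the torsion in the homology of $X$ does not alter the determinant, i.e. that $\det(L)=n$. First I would record the easy structural facts: since the intersection form is defined via cup product and rational Poincar\'e duality, every torsion class lies in its radical, so $Q_X$ descends to a positive definite form on the free group $L$ of rank $b_2(X)$. Writing $T_2=\mathrm{Tor}(H_2(X))$ and $T_1=\mathrm{Tor}(H_1(X))$, the map $j\colon H_2(X)\to H_2(X,Y)$ from \eqref{lES} is still injective (as $H_2(Y)=0$), and by exactness its cokernel is exactly $\ker(i_*)$, where $i_*\colon H_1(Y)\to H_1(X)$ is the last arrow.

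The hard part is proving $i_*=0$, and this is where square-freeness of $n$ enters. I would proceed by an order count with two ingredients. Comparing $j$ with the induced map $\bar{j}\colon L\to L^*$ on free quotients via the snake lemma gives a short exact sequence $0\to \mathrm{coker}(a)\to \mathrm{coker}(j)\to L^*/L\to 0$, where $a\colon T_2\to T_1$ is the restriction of $j$ to torsion; since $\mathrm{coker}(j)=\ker(i_*)$ this yields $|\ker(i_*)| = \det(L)\cdot |T_1|/|T_2|$. The second ingredient is an identity for the torsion orders: Poincar\'e--Lefschetz duality $H_2(X)\cong H^2(X,Y)$ together with the universal coefficient theorem gives $T_2\cong \mathrm{Tor}(H_1(X,Y))$, while the tail of the homology sequence of the pair shows $H_1(X,Y)\cong H_1(X)/\mathrm{im}(i_*)$, whence $|T_1|/|T_2| = |\mathrm{im}(i_*)|$. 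Setting $d=|\ker(i_*)|$ and $e=|\mathrm{im}(i_*)|$, so that $de=|H_1(Y)|=n$, the two identities combine to $\det(L)\cdot e^2 = n$. Because $n$ is square-free this forces $e=1$; hence $i_*=0$, $\ker(i_*)=H_1(Y)$, and $\det(L)=n$, which also identifies $H_1(Y)$ with the discriminant group $L^*/L$.

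With $\det(L)=n$ in hand, the remaining steps mirror the proof of Theorem~\ref{Main THM}. For a Spin$^{\mathbb{C}}$ structure $\mathfrak{s}$ on $X$, the class $c_1(\mathfrak{s})$ is characteristic in $H^2(X)$, and its image $\xi$ in $L^*$ is a characteristic covector of $L$ with $c_1(\mathfrak{s})^2 = Q_X^*(\xi,\xi)$, the torsion part contributing nothing to the square; moreover, since the projection $H^2(X)\to L^*$ is surjective and $c_1$ varies over a coset of $2H^2(X)$, the covector $\xi$ ranges over all of $char(L)$. Theorem~\ref{d-invt ineq} and Lemma~\ref{bounding lemma} then give $Q_X^*(\xi,\xi)\geq b_2(X)+\beta(n)$ for every $\xi\in char(L)$, and Theorem~\ref{lattice char} forces $L\cong \langle 1\rangle^{b_2(X)-1}\oplus\langle n\rangle$, as claimed.

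I expect the main obstacle to be the bookkeeping in the second paragraph: one must track torsion precisely through Poincar\'e--Lefschetz duality and the universal coefficient theorem to obtain the clean relation $|T_1|/|T_2|=|\mathrm{im}(i_*)|$, since it is the combination of this with the snake-lemma count that produces the decisive factor $e^2$ and lets square-freeness finish the argument. Everything else is a faithful repetition of the unimodular-corrected computation already carried out for Theorem~\ref{Main THM}.
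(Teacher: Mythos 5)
Your proposal is correct and follows the same route as the paper: the paper justifies this corollary by a one-sentence remark asserting that square-freeness of $n$ forces the last arrow of \eqref{lES} to vanish, so that $H_1(S^3_n(K))\cong L^*/L$ for $L=H_2(X)/\mathrm{Tor}$ and the argument of Theorem \ref{Main THM} applies verbatim. Your snake-lemma and duality bookkeeping, yielding $n=\det(L)\cdot|\mathrm{im}(i_*)|^2$, is exactly the standard argument behind that asserted vanishing, so you have simply supplied the details the paper leaves implicit.
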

\begin{rem}
    Consider the torus knot $T(2,n)$ where $n$ is odd and greater than $1$. It is shown in \cite{Moser1971ElementarySA}
    that
    $$
    S^3_{2n+1}(T(2,n)) \cong L(2n+1,4)
    $$
    This lens space bounds the linear lattice $\Lambda(2n+1,4)$. There is no element with self intersection one in this lattice if $n$ is greater than one. Hence, it cannot be isomorphic to $\langle 2n+1 \rangle\oplus\langle 1\rangle$. Note that we have
    $$
    2n+1 = 4g_4(T(2,n))+3.
    $$
    In particular, this shows the bound in Theorem \ref{Main THM} is sharp.
\end{rem}
 \section{A Concordance Invariant}
 In this section, we investigate the invariant $l(K)$ defined in the introduction.
 \begin{prop}
     Let $K$ be a knot in $S^3$ and assume $V_0(K)$ is zero. Then $l(K)$ is zero.
 \end{prop}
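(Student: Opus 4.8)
The plan is to show that the hypothesis $V_0(K)=0$ upgrades Lemma \ref{bounding lemma} so that the inequality $\beta(n)\le 4d(S^3_n(K),i)$ holds for \emph{every} positive integer $n$ and every $0\le i\le n-1$, not merely for $n>4g_4(K)+3$. Once this is in hand, the argument in the proof of Theorem \ref{Main THM} applies to all positive $n$, forcing every lattice bounded by $S^3_n(K)$ to be standard; the set $\{n:S^3_n(K)\text{ bounds a non-standard lattice}\}$ is then empty and $l(K)=0$.

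The first step is to deduce that $V_i(K)=0$ for all $i\ge 0$. Rasmussen's monotonicity $V_{i+1}(K)\le V_i(K)$ shows the sequence $(V_i)_{i\ge 0}$ is non-increasing, and each $V_i$ is a non-negative integer since it records a power of $U$. Combined with $V_0(K)=0$, this gives $0\le V_i\le V_0=0$, hence $V_i=0$ for every $i\ge 0$.

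Next I would compute the correction terms. For any positive $n$ and any $0\le i\le n-1$ both indices $i$ and $n-i$ are non-negative, so the vanishing of all $V_j$ together with Proposition \ref{d invt formula} yields $d(S^3_n(K),i)=d(L(n,1),i)=\frac{(2i-n)^2-n}{4n}$. Minimizing $(2i-n)^2$ over integers $i$ in $[0,n-1]$ gives $0$ when $n$ is even (at $i=n/2$) and $1$ when $n$ is odd (at $i=(n\pm 1)/2$); in either case $4d(S^3_n(K),i)\ge\beta(n)$, with equality at the minimizer. This is exactly the analogue of \eqref{beta ineq} valid for all $n$.

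Finally, I would run the proof of Theorem \ref{Main THM}: if a positive definite $X$ with torsion-free homology and intersection lattice $L$ bounds $S^3_n(K)$, then $\det L=n$ by \eqref{lES}, and Theorem \ref{d-invt ineq}, applied to the Spin$^{\mathbb{C}}$ structure corresponding to each characteristic covector $\xi\in char(L)$ and combined with the inequality just established, gives $Q_X^*(\xi,\xi)\ge rk(L)+\beta(n)$. Theorem \ref{lattice char} then forces $L\cong\langle 1\rangle^{rk(L)-1}\oplus\langle n\rangle$. I expect the only point requiring care to be the clean matching of the minimum of $(2i-n)^2$ with the two cases of $\beta(n)$ across all $n$, including the small values $n=1,2$; the passage from $V_0(K)=0$ to the full vanishing and the subsequent lattice conclusion are then immediate from the machinery already set up.
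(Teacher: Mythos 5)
Your proposal is correct and follows essentially the same route as the paper, which simply asserts that $V_0(K)=0$ makes the inequality $\beta(n)\leq 4d(S^3_n(K),i)$ hold for all positive $n$ and all $i$, and then invokes the proof of Theorem \ref{Main THM}. You have merely filled in the details the paper leaves implicit (the monotonicity argument giving $V_i=0$ for all $i$, and the minimization of $(2i-n)^2$ matching the two cases of $\beta(n)$), all of which check out.
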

 \begin{proof}
     If $V_0(K)$ is zero, then 
     \begin{equation*}
         \beta(n)\leq 4d(S^3_n(K),i)
     \end{equation*}
     for every positive integer $n$ and $0 \leq i\leq n-1$. Hence, the result follows from the proof of Theorem \ref{Main THM}.
 \end{proof}
 \begin{prop}
Let $K_1$ and $K_2$ be two concordant knots in $S^3$. Then $l(K_1)=l(K_2)$. 
 \end{prop}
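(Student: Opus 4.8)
The plan is to promote the concordance between $K_1$ and $K_2$ into a cobordism between their $n$-surgeries that is transparent to the lattice-bounding property, and then to run a gluing argument for every positive integer $n$ at once. Note first that one cannot simply argue via correction terms: the invariants $V_i$, and hence all the $d(S^3_n(K),i)$ by Proposition \ref{d invt formula}, are concordance invariants, so $d(S^3_n(K_1),i)=d(S^3_n(K_2),i)$ for all $n,i$. For $n>4g_4+3$ this already forces both surgeries to bound only standard lattices by the proof of Theorem \ref{Main THM}, but for small $n$ the $d$-invariant inequality of Theorem \ref{d-invt ineq} is only a sufficient obstruction, and bounding a non-standard lattice is a genuinely four-dimensional condition that is not visibly read off from the $d$-invariants alone. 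This is why I would instead transport actual fillings across a cobordism.

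The first step is to build, from a smooth concordance $C\subset S^3\times I$ from $K_1$ to $K_2$, a cobordism $W_n$ from $S^3_n(K_1)$ to $S^3_n(K_2)$ by performing $n$-framed surgery along $C$: remove an open tubular neighbourhood $\nu(C)\cong C\times D^2$ from $S^3\times I$ and glue back $(S^1\times D^2)\times I$ along the resulting $T^2\times I$ using the integral surgery framing $n$ in each slice. The two ends of $W_n$ are then exactly $S^3_n(K_1)$ and $S^3_n(K_2)$. The key technical claim is that $W_n$ is an \emph{integral} homology cobordism. I would verify this by computing $H_*(S^3\times I\setminus\nu(C))$ via the long exact sequence of the pair together with excision and Alexander duality, showing that the concordance complement has the integral homology of a knot complement, and then running a Mayer--Vietoris computation over the regluing; the upshot is that $H_*(W_n,S^3_n(K_i);\mathbb{Z})=0$ for $i=1,2$.

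Granting this, I would transfer fillings across $W_n$. Suppose $S^3_n(K_1)$ bounds a lattice $L$ in the sense of Definition \ref{Lattice Def}, realised by a positive definite four-manifold $X_1$ with torsion-free homology and $\partial X_1=S^3_n(K_1)$. Form $X_2\coloneqq X_1\cup_{S^3_n(K_1)}W_n$, so that $\partial X_2=S^3_n(K_2)$. Because $W_n$ is a homology cobordism, Mayer--Vietoris shows that the inclusion $X_1\hookrightarrow X_2$ induces an isomorphism on all of $H_*(\,\cdot\,;\mathbb{Z})$; in particular $X_2$ has no torsion in its homology and $Q_{X_2}\cong Q_{X_1}\cong L$, so $X_2$ is again positive definite and realises $L$. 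Hence $S^3_n(K_1)$ bounding $L$ implies $S^3_n(K_2)$ bounds $L$, and the reversed cobordism $\bar W_n$ (equivalently, the concordance run backwards) gives the converse. Taking $L$ to be non-standard, this shows that for every positive integer $n$ the surgery $S^3_n(K_1)$ bounds a non-standard lattice if and only if $S^3_n(K_2)$ does. The two sets appearing in the definition of $l$ therefore coincide, and $l(K_1)=l(K_2)$.

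I expect the homological bookkeeping to be the crux: verifying that $W_n$ is an integral (not merely rational) homology cobordism is essential, since gluing must preserve both the isomorphism class of the intersection lattice and the no-torsion hypothesis of Definition \ref{Lattice Def}; a purely rational statement would only control determinants and signatures. A secondary point to pin down carefully is the orientation convention on $W_n$, so that gluing a positive definite $X_1$ to $W_n$ yields a positive definite $X_2$ rather than disturbing the signature. Once the homology cobordism and its orientation are established, the transfer of fillings and the resulting equality of the two defining sets are formal.
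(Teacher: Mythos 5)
Your proposal is correct and follows essentially the same route as the paper: surger the concordance annulus inside $S^3\times I$ to obtain a homology cobordism from $S^3_n(K_1)$ to $S^3_n(K_2)$, then glue fillings across it to show the two surgeries bound exactly the same lattices. The paper's proof is just a terser version of the same argument, leaving the homological verification implicit.
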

 \begin{proof}
     Fix a positive integer $n$ and let $Y_1$ and $Y_2$ denote $S^3_n(K_1)$ and $S^3_n(K_2)$ respectively. Consider a properly embedded annulus $A$ in $S^3\times I$ going from $K_1$ to $K_2$. We can extend $n$ surgery along $K_1$ and $K_2$ to $A$ and the resulting four manifold will be a homology cobordism from $Y_1$ to $Y_2$. Denote this homology cobordism by $W$. Let $L$ be a positive definite lattice bounded by $Y_1$ and $X_1$ be the four manifold with intersection form $L$ and $\partial X_1 = Y_1$. We glue $X_1$ to $W$ along $Y_1$ and call the resulting four manifold $X_2$. This four manifold has the same intersection form and homology as $X_1$ since $W$ is a homology cobordism. Hence, the lattice $L$ is also bounded by $Y_2$. In particular, the sets of lattices bounded by $Y_1$ and $Y_2$ are the same; we conclude that $l(K_1)=l(K_2)$ 
 \end{proof}
 Let $T$ be a null-homologous knot in a three manifold $Y$ and fix a positive integer $n$. Denote the $\frac{1}{n}$-surgery on $Y$ along $T$ by $Y_{\frac{1}{n}}(T)$. We can write $\frac{1}{n}$ as a continued fraction given by
 $$
 [1,2,\dots,2]^{-},
 $$
 where there are $n-1$, $2$'s in the continued fraction. We can use this to construct a two handle cobordism from $Y$ to $Y_{\frac{1}{n}}(T)$. We denote this cobordism by $X_{\frac{1}{n}}(T)$; this four manifold is positive definite and its intersection form is isomorphic to $\langle 1\rangle^{n}$.
 \begin{defi}
     We say two definite lattices $L_1$ and $L_2$ are stably equivalent if there exist non-negative integers $b_1$ and $b_2$ such that
     $$
     L_1\oplus\langle 1\rangle ^{b_1}\cong L_2 \oplus \langle 1 \rangle^{b_2}.
     $$
     This is an equivalence relation among positive definite lattices. 
 \end{defi}
  \begin{defi}
     Fix a knot $K$ in $S^3$ and let $n$ be a positive integer. We define $L(K,n)$ to be the set of all positive definite lattices that bound $S^3_n(K)$ up to stable equivalence. 
 \end{defi}
 \begin{lem}
  Let $K$ be a knot in $S^3$ and $c$ a negative crossing of $K$. Consider a crossing disk $D$ for $c$ and denote its boundary by $T$. Fix a positive integer $m$ and let $K_m$ denote the knot obtained from $K$ by performing $\frac{1}{m}$ surgery along $T$. We have
  $$
  L(K,n)\subseteq L(K_m,n)
  $$
  for every positive integer $n$.
 \end{lem}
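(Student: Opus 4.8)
The plan is to produce, from any positive definite four-manifold with no torsion in its homology, intersection form $L$, and boundary $S^3_n(K)$, a positive definite four-manifold with no torsion in its homology, boundary $S^3_n(K_m)$, and intersection form $L\oplus\langle 1\rangle^m$, which is stably equivalent to $L$. The bridge between the two surgered manifolds is the positive definite two-handle cobordism $X_{\frac1m}(T)$ constructed just before the lemma. The first thing I would establish is that $T$ is null-homologous in $S^3_n(K)$: since $D$ is a crossing disk, the two strands of $K$ meet $D$ with opposite signs, so $\mathrm{lk}(T,K)=0$; hence $T$ is null-homologous in $S^3$, and its class in $H_1(S^3_n(K))\cong\mathbb{Z}/n$ equals $\mathrm{lk}(T,K)\cdot[\mu_K]=0$. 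This places us in the hypotheses of the construction preceding the lemma, applied to $T\subset Y=S^3_n(K)$.

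Next I would identify the surgered manifold. As $K$ and $T$ are disjoint, the $n$-surgery on $K$ and the $\frac1m$-surgery on $T$ commute; performing the $\frac1m$-surgery on $T$ first turns $K$ into $K_m$, and because $\mathrm{lk}(T,K)=0$ the Seifert framing of $K$ is unaffected, so the coefficient $n$ is carried to the $n$-framing of $K_m$. This gives a diffeomorphism $\big(S^3_n(K)\big)_{\frac1m}(T)\cong S^3_n(K_m)$. Applying the earlier construction to $T\subset S^3_n(K)$ then furnishes a positive definite cobordism $X_{\frac1m}(T)$ from $S^3_n(K)$ to $S^3_n(K_m)$ with intersection form $\langle 1\rangle^m$; the positivity of $m$ makes the surgery coefficient $\frac1m$ positive, which is exactly the case in which that cobordism is positive definite, and the negative crossing $c$ is the setting in which this positive twisting is the operation of interest.

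Given $L\in L(K,n)$, realized by a positive definite $X_1$ with no torsion in its homology, $\partial X_1=S^3_n(K)$ and $Q_{X_1}\cong L$, I would set $X_2:=X_1\cup_{S^3_n(K)}X_{\frac1m}(T)$, so that $\partial X_2=S^3_n(K_m)$ and $X_2$ is again positive definite. In a Kirby picture the cobordism attaches two-handles to $\partial X_1$ along a framed link in $S^3_n(K)$ all of whose components are null-homologous: the crossing circle $T$ because $\mathrm{lk}(T,K)=0$, and the auxiliary chain circles coming from the continued fraction $[1,2,\dots,2]^-$ because they lie in a ball. Each attaching circle therefore bounds a surface already inside $S^3_n(K)$, which I can push into the collar $S^3_n(K)\times I$; the resulting new second homology classes then live in the cobordism region, disjoint from $X_1$. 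This simultaneously shows that the connecting homomorphism $H_2(X_2,X_1)\cong\mathbb{Z}^m\to H_1(X_1)$ vanishes, so $H_\ast(X_2)$ is torsion-free with $H_2(X_2)\cong H_2(X_1)\oplus\mathbb{Z}^m$, and that the form splits orthogonally, $Q_{X_2}\cong L\oplus\langle 1\rangle^m$. Thus $L\oplus\langle 1\rangle^m$, which is stably equivalent to $L$, bounds $S^3_n(K_m)$, and since $L$ was arbitrary we conclude $L(K,n)\subseteq L(K_m,n)$.

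The step demanding the most care is this last homological bookkeeping: I must check both that gluing introduces no torsion into $H_\ast(X_2)$ and that $Q_{X_2}$ is genuinely the orthogonal sum $L\oplus\langle 1\rangle^m$ rather than a form with nonzero cross terms. The decisive input is that every attaching circle is null-homologous already in $S^3_n(K)$, not merely in $X_1$, which is what lets me realize the new classes inside the collar and force the cross terms to vanish. The commuting-surgery identification, though intuitively clear, likewise requires the framing computation above, and both points rest on the single fact that $\mathrm{lk}(T,K)=0$.
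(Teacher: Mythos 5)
Your proposal is correct and follows essentially the same route as the paper: glue the positive definite two-handle cobordism $X_{\frac{1}{m}}(T)$ onto the given filling of $S^3_n(K)$, using the fact that $T$ is null-homologous in $S^3_n(K)$ (since $\mathrm{lk}(T,K)=0$) to see that the intersection form becomes $L\oplus\langle 1\rangle^m$ with no torsion introduced. You simply spell out the framing computation and the homological bookkeeping that the paper leaves implicit.
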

 \begin{proof}
     Let $Y_1$ and $Y_2$ denote $n$ surgery on $K$ and $K_m$ respectively. The knot $T$ is a null-homologous knot in $Y_1$ since it has zero linking number with $K$. Consider the two handle cobordism $X_{\frac{1}{m}}(T)$ from $Y_1$ to $Y_2$. Suppose $Y_1$ bounds a lattice $L$, $X_1$ is a four manifold with intersection form $L$ and $\partial X_1=Y_1$. We glue $X_1$ to $X_{\frac{1}{m}}(T)$ along $Y_1$ and denote the resulting four manifold by $X_2$. Since $T$ is a null-homologous knot in $Y_1$, we get
     $$
     Q_{X_2} \cong Q_{X_1}\oplus\langle 1\rangle^{m}\cong L\oplus\langle 1\rangle ^m.
     $$
     The four manifold $X_2$ does not have torsion in its homology because $X_1$ does not have torsion in its homology by assumption and $X_{\frac{1}{m}}(T)$ is a two handle cobordism. Hence, $Y_2$ bounds $L\oplus\langle 1\rangle^m$ and we conclude the claim.
 \end{proof}
 \begin{cor}
     For every positive integer $m$, we have
     $$
     l(K)\leq l(K_m).
     $$
     In particular, if $K^+$ denote the knot obtained from another knot $K^-$ by changing a negative crossing $c$ to a positive one, then $l(K^-)\leq l(K^+)$.
 \end{cor}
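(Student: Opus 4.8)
The plan is to deduce the inequality from the inclusion $L(K,n)\subseteq L(K_m,n)$ established in the previous lemma, after reinterpreting the phrase ``bounds a non-standard lattice'' in terms of these sets of stable equivalence classes. First I would record the two ingredients that make this translation possible. Since $n$ is positive, the trace of the $n$-surgery (capped off with a ball on the $S^3$ side) is a positive definite filling of $S^3_n(K)$ with no torsion and intersection form $\langle n\rangle$; hence the standard class $[\langle n\rangle]$ always belongs to $L(K,n)$, and likewise to $L(K_m,n)$. Moreover $S^3_n(K_m)$ is $n$-surgery on a knot in $S^3$, so it again has first homology $\mathbb{Z}/n\mathbb{Z}$ and every lattice it bounds has determinant $n$.

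Second, and this is the technical heart, I would prove a cancellation statement for positive definite lattices: if $L$ is positive definite of determinant $n$, then $L$ is stably equivalent to $\langle n\rangle$ if and only if $L\cong\langle n\rangle\oplus\langle 1\rangle^{rk(L)-1}$, i.e. $L$ is standard. The mechanism is that a vector $v$ of norm one in a positive definite integral lattice always splits off an orthogonal summand, $L\cong\langle 1\rangle\oplus v^{\perp}$, because $Q(w,v)\in\mathbb{Z}$ for every $w\in L$. Collecting a maximal orthogonal set of norm-one vectors writes $L\cong\langle 1\rangle^{s}\oplus L^{\mathrm{red}}$ with $L^{\mathrm{red}}$ containing no norm-one vector, and a short positivity argument shows the span of the norm-one vectors (and hence $L^{\mathrm{red}}$) is canonical, so $L^{\mathrm{red}}$ is an isomorphism invariant. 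Consequently $L_1\oplus\langle 1\rangle^{b_1}\cong L_2\oplus\langle 1\rangle^{b_2}$ forces $L_1^{\mathrm{red}}\cong L_2^{\mathrm{red}}$; applying this to $L$ and $\langle n\rangle$ gives $L^{\mathrm{red}}\cong\langle n\rangle$ (as $n>1$ leaves $\langle n\rangle$ without norm-one vectors), which is exactly standardness. The degenerate case $n=1$ is handled the same way, with $\langle n\rangle$ replaced by the trivial reduced part and ``standard'' meaning $\langle 1\rangle^{r}$.

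With these in hand the argument is purely set-theoretic. By the cancellation statement, $S^3_n(K)$ bounds a non-standard lattice precisely when $L(K,n)$ contains a class different from $[\langle n\rangle]$, i.e. when $L(K,n)\neq\{[\langle n\rangle]\}$, and the identical equivalence holds for $K_m$. Now suppose $S^3_n(K)$ bounds a non-standard lattice. Then $L(K,n)$ contains a class other than $[\langle n\rangle]$; by the lemma this class lies in $L(K_m,n)$ as well, so $L(K_m,n)\neq\{[\langle n\rangle]\}$ and $S^3_n(K_m)$ bounds a non-standard lattice too. Therefore
$$
\{\,n: S^3_n(K)\text{ bounds a non-standard lattice}\,\}\subseteq\{\,n: S^3_n(K_m)\text{ bounds a non-standard lattice}\,\},
$$
and since the two index sets are nested the suprema obey $l(K)\le l(K_m)$. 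For the final assertion I would take $m=1$: the $\tfrac{1}{1}$-surgery on the crossing circle $T$ realizes the change of the crossing $c$ from negative to positive, so $K_1=K^+$ when $K=K^-$, and the main inequality specializes to $l(K^-)\le l(K^+)$.

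The main obstacle is the cancellation step: the definitions phrase non-standardness up to isomorphism while the lemma only controls lattices up to stable equivalence, so everything hinges on showing that stable equivalence can neither turn a non-standard lattice into a standard one nor the reverse. The norm-one splitting settles this cleanly, but one must be careful with the canonicity of $L^{\mathrm{red}}$ and with the small-determinant cases. A secondary point to verify is the geometric claim that the $m=1$ surgery realizes exactly the crossing change; this is the standard identification of a crossing change with $\pm1$-surgery on a crossing circle, and requires only checking that the orientation conventions send the negative crossing $c$ to a positive one.
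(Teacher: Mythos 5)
Your argument is correct and follows the paper's intended route: the corollary is stated there as an immediate consequence of the preceding lemma, and you deduce it from the inclusion $L(K,n)\subseteq L(K_m,n)$ exactly as intended, with the $m=1$ case giving the crossing-change statement. The cancellation step (a positive definite lattice of determinant $n$ is stably equivalent to $\langle n\rangle$ if and only if it is isomorphic to $\langle n\rangle\oplus\langle 1\rangle^{rk(L)-1}$, proved by splitting off norm-one vectors) is precisely the detail the paper leaves implicit, and your treatment of it is sound.
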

 \begin{defi}
     A knot $K$ in $S^3$ is called negative if it admits a diagram without positive crossings.
 \end{defi}
 \begin{cor}
     Let $K$ be a negative knot. Then $l(K)=0$ and $S_n^3(K)$ does not bound any non-standard lattice. 
 \end{cor}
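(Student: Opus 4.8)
The plan is to combine the preceding corollary on crossing changes with the classical fact that every knot diagram can be unknotted by crossing changes. Since $K$ is negative, I would fix a diagram $D$ of $K$ in which every crossing is negative. The first observation is that switching the over- and under-strands at a single crossing reverses the sign of that crossing while leaving the signs of all other crossings unchanged; consequently, any crossing change performed on $D$ converts a negative crossing into a positive one, which is precisely the situation covered by the corollary (via $m=1$ surgery on the crossing circle).

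Next I would invoke the descending-diagram construction: traversing $K$ from a chosen basepoint and declaring each crossing to be an overcrossing the first time it is encountered produces a descending diagram, which represents the unknot. This is realized by switching some subset $S$ of the crossings of $D$. Switching the crossings of $S$ one at a time yields a sequence of knots $K = K_0, K_1, \dots, K_r = U$, where $U$ is the unknot and each $K_{i+1}$ is obtained from $K_i$ by changing a single crossing. Because every crossing of $D$ begins negative and is switched at most once, the crossing altered at step $i$ is negative in $K_i$ and positive in $K_{i+1}$.

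I would then apply the preceding corollary at each step: since $K_{i+1}$ arises from $K_i$ by changing a negative crossing to a positive one, we have $l(K_i) \le l(K_{i+1})$, and chaining these gives
\begin{equation*}
l(K) = l(K_0) \le l(K_1) \le \dots \le l(K_r) = l(U).
\end{equation*}
Since $U$ is slice, $l(U) = 0$ (equivalently, $V_0(U)=0$ together with the earlier proposition), so $l(K) \le 0$. As the supremum defining $l$ is taken over positive integers, with the empty supremum set to $0$, we also have $l(K) \ge 0$, whence $l(K) = 0$. Finally, $l(K)=0$ forces the set $\{\, n \ge 1 : S^3_n(K) \text{ bounds a non-standard lattice}\,\}$ to have supremum $0$; since every element of this set is at least $1$, the set is empty, so $S^3_n(K)$ bounds no non-standard lattice for any $n$.

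The only step that requires genuine care is the claim that $D$ can be transformed into a diagram of the unknot using crossing changes that are all of negative-to-positive type. This is exactly what the descending-diagram construction provides, once one notes that a crossing change flips the sign of the affected crossing and that no crossing needs to be switched more than once; everything else is a direct application of the crossing-change corollary and the sign conventions for $l$.
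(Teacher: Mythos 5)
Your proof is correct and is precisely the argument the paper intends (the corollary is stated without proof, immediately after the crossing-change monotonicity result that drives it): unknot the all-negative diagram via descending-diagram crossing switches, each of which is a negative-to-positive change, chain $l(K)\le l(U)=0$, and observe that $l(K)=0$ forces the defining set to be empty. The one point needing care --- that each switched crossing is still negative at the moment it is switched --- is handled correctly, since a crossing change does not alter the signs of the other crossings.
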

 We conclude this section with a remark about the invariant $l(K)$. Consider a knot $K$ with $l(K)>1$ and let $n$ be a positive integer less than $l(K)$. The definition for $l(K)$ does not imply that $S^3_n(K)$ bounds a non-standard lattice, but this is in fact true. If we consider the natural positive definite two handle cobordism from $S^3_{l(K)}(K)$ to $S^3_n(K)$ and glue it to the non-standard filling of $S^3_{l(K)}(K)$, we get a positive definite filling of $S^3_n(K)$ with no torsion in its homology; it remains to prove the intersection form of this filling is also non-standard and this follows from Lemma $3.1$ in \cite{golla2019definite} and the inductive argument we mentioned in the introduction. In other words, if the filling for $S^3_n(K)$ was standard, then the filling for $S^3_{l(K)}(K)$ would be standard which contradicts the definition of $l(K)$. Hence, we get
 \begin{prop}
     Let $K$ be a knot in the three sphere with $l(K)$ greater than zero. Then for every positive integer $n$ less than or equal to $l(K)$, the three manifold $S^3_n(K)$ bounds a non-standard lattice.
 \end{prop}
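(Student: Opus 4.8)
The plan is to run the inductive cobordism argument sketched in the introduction in reverse, one surgery coefficient at a time, and to invoke Golla's Lemma 3.1 in \cite{golla2019definite} to propagate non-standardness downward. First, observe that the hypothesis $l(K)>0$ together with Theorem \ref{Main THM} guarantees that the defining set $\{m:S^3_m(K)\text{ bounds a non-standard lattice}\}$ is a non-empty set of positive integers bounded above by $4g_4(K)+3$; hence its supremum is attained, so $S^3_{l(K)}(K)$ itself bounds a non-standard lattice. It therefore suffices to prove the one-step claim: for every integer $m$ with $2\le m\le l(K)$, if $S^3_m(K)$ bounds a non-standard lattice then so does $S^3_{m-1}(K)$. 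A downward induction starting from $m=l(K)$ then produces a non-standard lattice bounded by $S^3_n(K)$ for every $1\le n\le l(K)$, since each intermediate coefficient $m-1\ge 1$ is again a rational homology sphere.

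For the one-step claim I would use the positive definite $2$-handle cobordism $W_m$ from $S^3_m(K)$ to $S^3_{m-1}(K)$ obtained by attaching a single $+1$-framed $2$-handle along a meridian $\mu$ of $K$ inside $S^3_m(K)$; blowing this handle down lowers the surgery coefficient by one. Let $X_m$ be a four-manifold with no torsion in its homology, $\partial X_m=S^3_m(K)$, and $Q_{X_m}$ non-standard. Since $\mu$ is torsion in $H_1(S^3_m(K))$ and its class dies in $H_1(X_m)$, it bounds a surface in $X_m$; capping this surface with the core of the new $2$-handle produces one extra second-homology class. Thus gluing $W_m$ to $X_m$ along $S^3_m(K)$ yields a positive definite four-manifold $X_{m-1}$ with $\partial X_{m-1}=S^3_{m-1}(K)$ and $b_2(X_{m-1})=b_2(X_m)+1$, whose intersection lattice $Q_{X_{m-1}}$ is an enlargement of $Q_{X_m}$ by a single vector and has determinant $m-1$, matching $|H_1(S^3_{m-1}(K))|$ via \eqref{lES}. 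One also checks that $X_{m-1}$ retains no torsion in its homology, as required by Definition \ref{Lattice Def}.

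It then remains to show $Q_{X_{m-1}}$ is non-standard, and I would argue by contradiction. If $Q_{X_{m-1}}$ were the standard lattice $\langle 1\rangle^{b_2(X_{m-1})-1}\oplus\langle m-1\rangle$, then Golla's Lemma 3.1 applies: it is precisely the statement that, for this positive definite $2$-handle cobordism, if the enlarged form $Q_{X_m\cup W_m}$ is standard then the original form $Q_{X_m}$ must already be standard. This would contradict the non-standardness of $Q_{X_m}$. Hence $Q_{X_{m-1}}$ is non-standard, which establishes the one-step claim and, by the downward induction, the proposition.

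The crux of the argument — and the step I expect to be the main obstacle — is the lattice-theoretic descent encoded in Lemma 3.1. One must verify that the enlargement taking $Q_{X_m}$ to $Q_{X_{m-1}}$ has exactly the shape the lemma requires, namely a rank-one extension by the class dual to $\mu$, and that standardness of the determinant-$(m-1)$ enlargement genuinely forces standardness of the determinant-$m$ sublattice. Equally, I would need to confirm throughout that positive definiteness and the absence of torsion in homology are preserved at each gluing, so that every $X_{m-1}$ is a legitimate filling in the sense of Definition \ref{Lattice Def} and the induction can be carried all the way from $m=l(K)$ down to $m-1=n$.
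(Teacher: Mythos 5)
Your proposal is correct and follows essentially the same route as the paper: glue the positive definite two-handle cobordism lowering the surgery coefficient to a non-standard filling of $S^3_{l(K)}(K)$ (the paper does this in one composite step, you do it one coefficient at a time, which is the same induction), and then invoke Lemma $3.1$ of \cite{golla2019definite} to conclude that standardness of the resulting filling would force standardness of the original one, a contradiction. Your additional observation that the supremum defining $l(K)$ is attained (since the set is non-empty and bounded above by Theorem \ref{Main THM}) is a point the paper leaves implicit.
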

      
 \section{Lens Space Surgeries and Sharp Fillings}
 In \cite{Rasmussen_2004}, Rasmussen proved if a knot $K$ admits an integer lens space surgery, then the absolute value of the surgery slope is less than or equal to $4g(K)+3$. In this section, we generalize this result to surgery of $L$-space knots bounding sharp fillings.
 Let $Y$ be a $L$-space and $X$ a negative definite four manifold without torsion in its homology bounding $Y$.
 \begin{defi}
     We say $X$ is a sharp filling of $Y$ if for every $\mathfrak{t}$ in spin$^{\mathbb{C}}(Y)$, there is a spin$^\mathbb{C}$ structure $\mathfrak{s}$ on $X$ that restricts to $\mathfrak{t}$ on $Y$ and we have
     \begin{equation}
     \label{sharp eq}
         d(Y,\mathfrak{t}) = \frac{c_1(\mathfrak{s})^2+b_2(X)}{4}.
     \end{equation}
     This is equivalent to $\widehat{HF}(X\setminus B^4,\mathfrak{s})$ being an isomorphism from $\widehat{HF}(S^3)$ to $\widehat{HF}(Y,\mathfrak{t})$.  
 \end{defi}
 \begin{rem}
     For instance, the linear plumbing $-X(p,q)$ is a sharp filling for $L(p,-q)$.
 \end{rem}
 \begin{lem}
 \label{int form of sharp fillings}
     Let $K$ be a $L$-space knot and assume $S^3_{-n}(K)$ is a $L$-space which admits a sharp negative definite filling $X$ for some positive integer $n$. If we have
     $$
     Q_X\cong \langle -n\rangle\oplus\langle -1\rangle^{b_2(X)-1},
     $$
     then $K$ is the unknot.
 \end{lem}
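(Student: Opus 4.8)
The plan is to show that the sharpness of $X$ together with the hypothesis on $Q_X$ forces the correction terms of $Y\coloneqq S^3_{-n}(K)$ to coincide with those of $S^3_{-n}(U)$, and then to feed this into Proposition \ref{d invt formula} to kill all the $V_i$. Since $S^3_{-n}(K)$ is an $L$-space, its orientation reversal $-Y=S^3_n(m(K))$ is also an $L$-space, so $m(K)$ is an $L$-space knot admitting positive $L$-space surgeries; for such a knot $V_0=0$ holds if and only if it is the unknot, the $V_i$ being the torsion coefficients of the Alexander polynomial, which are positive in positive genus. Thus the whole argument reduces to proving $V_0(m(K))=0$, which then yields $K=U$.

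First I would record the negative definite analogue of Theorem \ref{d-invt ineq}: reversing orientation turns $X$ into a positive definite filling of $-Y$ and negates the square of every covector, so for every spin$^{\mathbb{C}}$ structure $\mathfrak{s}$ on $X$ one has $d(Y,\mathfrak{t})\geq \frac{c_1(\mathfrak{s})^2+b_2(X)}{4}$, and sharpness makes this an equality for some $\mathfrak{s}$, hence $d(Y,\mathfrak{t})=\max_{\mathfrak{s}|_Y=\mathfrak{t}}\frac{c_1(\mathfrak{s})^2+b_2(X)}{4}$. Writing $b\coloneqq b_2(X)$ and choosing an orthogonal basis $e_0,\dots,e_{b-1}$ of $L=H_2(X)$ with $e_0\cdot e_0=-n$ and $e_i\cdot e_i=-1$ for $i\geq 1$, a characteristic covector $\xi$ is determined by the integers $m\coloneqq-\langle\xi,e_0\rangle$ and $x_i\coloneqq-\langle\xi,e_i\rangle$, which satisfy $m\equiv n\pmod 2$ and each $x_i$ odd, and a direct computation gives $c_1(\mathfrak{s})^2=Q_X^*(\xi,\xi)=-\tfrac{m^2}{n}-\sum_{i\geq 1}x_i^2$. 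The restriction $\mathfrak{s}|_Y$ only constrains $\xi$ modulo $2L$, i.e. it fixes $m$ modulo $2n$ and leaves each $x_i$ free among odd integers, so within a fixed restriction I may take each $x_i=\pm1$ and $m$ of minimal absolute value in its class modulo $2n$; this gives $d(Y,\mathfrak{t})=\frac{n-m^2}{4n}$, and as $\mathfrak{t}$ varies the values $m^2$ run exactly through $\{(2i-n)^2:0\leq i\leq n-1\}$, which are the correction terms of $d(-L(n,1),\cdot)=d(S^3_{-n}(U),\cdot)$.

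To avoid matching the two labelings of $\mathrm{Spin}^{\mathbb{C}}(Y)$ — one coming from characteristic covectors, the other from the identification $\rho$ used in Proposition \ref{d invt formula} — I would compare the two computations only after summing over all spin$^{\mathbb{C}}$ structures, which is legitimate since the sum is preserved under any relabeling and under orientation reversal. The previous paragraph gives $\sum_{\mathfrak{t}}d(Y,\mathfrak{t})=\sum_{i=0}^{n-1}\frac{n-(2i-n)^2}{4n}=\sum_i d(S^3_{-n}(U),i)$. On the other hand $d(Y,\cdot)=-d(S^3_n(m(K)),\cdot)$ together with Proposition \ref{d invt formula} gives $\sum_{\mathfrak{t}}d(Y,\mathfrak{t})=\sum_i d(S^3_{-n}(U),i)+2\sum_i\max(V_i(m(K)),V_{n-i}(m(K)))$. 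Equating the two forces $\sum_i\max(V_i(m(K)),V_{n-i}(m(K)))=0$; since every $V_i\geq 0$, all terms vanish, and in particular $V_0(m(K))=0$. As $m(K)$ is an $L$-space knot this makes it the unknot, so $K=U$.

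The main obstacle, and the reason for the summation device, is the bookkeeping of spin$^{\mathbb{C}}$ structures: the self-pairings of characteristic covectors index the correction terms by residues modulo $2n$, whereas Proposition \ref{d invt formula} uses the affine identification $\rho$ with $\mathbb{Z}/n\mathbb{Z}$, and reconciling them one structure at a time is delicate; summing over the whole set sidesteps this entirely, at the mild cost of invoking the nonnegativity of the $V_i$ to pass from a vanishing sum to termwise vanishing. A secondary point to verify is the asserted optimization over each coset modulo $2L$ (that the maximum is attained at $x_i=\pm1$ and minimal $|m|$), which is routine once the characteristic condition $m\equiv n\pmod 2$, $x_i$ odd, is in hand.
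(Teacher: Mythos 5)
Your proof is correct, and its first half --- extracting $\sum_{\mathfrak{t}} d(S^3_{-n}(K),\mathfrak{t})$ from sharpness plus the explicit form of the lattice, and summing over all spin$^{\mathbb{C}}$ structures precisely to dodge the labelling problem --- is the same device the paper uses: the paper packages that sum as the Casson--Walker invariant $\lambda$ and obtains the matching with the correction terms of $L(n,-1)$ from the fact that $X$ is sharp with the same stable intersection form as $-X(n,1)$, but in the end it too only uses the sum. Where you genuinely diverge is the endgame. The paper feeds the equality $\lambda(S^3_{-n}(K))=\lambda(L(n,-1))$ into the surgery formula for the Casson--Walker invariant to conclude $\Delta_K''(1)=0$, and then invokes the fact that the unknot is the only $L$-space knot with vanishing $\Delta''(1)$. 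You instead substitute Proposition \ref{d invt formula} into the sum, obtain $\sum_i\max\bigl(V_i(m(K)),V_{n-i}(m(K))\bigr)=0$, and use nonnegativity of the $V_i$ to force $V_0(m(K))=0$, which rules out a nontrivial positive $L$-space knot. Your route is somewhat more self-contained, since it uses only the Ni--Wu formula already present in the paper rather than importing the Casson--Walker surgery formula, and it is more careful about the spin$^{\mathbb{C}}$ bookkeeping: the paper's affine isomorphism $\sigma$ matching correction terms one at a time is asserted rather than verified (and is not actually needed), whereas you prove exactly the summed statement you use. The one external fact you rely on --- that a positive $L$-space knot with $V_0=0$ has genus zero --- is standard and plays precisely the role of the $\Delta''(1)$ fact cited in the paper.
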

 \begin{proof}
     Combining the long exact sequence in \eqref{lES} with the isomorphism between $Q_X$ and $\langle -n\rangle\oplus\langle -1\rangle^{b_2(X)-1}$, we can find an affine isomorphism $\sigma$ between spin$^\mathbb{C}(L(n,-1))$ and spin$^\mathbb{C}(S^3_{-n}(K))$ such that
     $$
     d(L(n,-1),i)=d(S^3_{-n}(K),\sigma(i))
     $$
     for every $i$ because the filling $X$ is sharp and has the same intersection form as $-X(n,1)$ up to stabilization. Hence, we get
     $$
     \lambda(L(n,-1))=\sum_{i=0}^{n-1} d(L(n,-1),i)=\sum_{i=0}^{n-1} d(S^3_{-n}(K),i)=\lambda(S^3_{-n}(K)),
     $$
     where $\lambda$ denotes the Casson-Walker invariant. Using the surgery formula for Casson-Walker invariant, we conclude
     $$
     \Delta_K^{''}(1)= \frac{1}{n}\cdot(\lambda(S^3_{-n}(K))-\lambda(L(-n,1)))=0.
     $$
     The only $L$-space knot with vanishing $\Delta_K^{''}(1)$ is the unknot; see the proof of Theorem $1.4$ in \cite{5} for more details.
 \end{proof}
 \begin{proof}[\textbf{Proof of Theorem \ref{Slope Bounds}}]
     Since $K$ is non-trivial, Lemma \ref{int form of sharp fillings} implies that $S^3_{-n}(K)$ has a sharp filling $X$ where $Q_X$ is not isomorphic to 
     $$
     \langle -n\rangle\oplus\langle -1\rangle^{b_2(X)-1}.
     $$
     Now consider the mirror of $K$; the four manifold $-X$ is a positive definite filling for $n$ surgery on the mirror of $K$ and this filling is non-standard in the sense of previous section. Therefore, we must have
     \begin{equation*}
         n\leq l(m(K)) \leq4g(K)+3
     \end{equation*}
     by Theorem \ref{Main THM}.
 \end{proof}
 \begin{cor}
     Suppose $K$ is a non-trivial knot that admits an integer lens space surgery. Then the absolute value of the surgery slope is less than or equal to $4g(K)+3$.
 \end{cor}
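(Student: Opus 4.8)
The plan is to reduce the statement to a single application of Theorem \ref{Slope Bounds}. Suppose $S^3_p(K)$ is a lens space for some nonzero integer $p$; the goal is to show $|p|\le 4g(K)+3$. Since both the genus $g(K)$ and the target bound $4g(K)+3$ are invariant under mirroring, and since $S^3_{-p}(m(K))=-S^3_p(K)$ is again a lens space (the orientation reversal of a lens space being a lens space), I would first replace $K$ by $m(K)$ if necessary so as to assume the surgery slope is negative, say $p=-n$ with $n$ a positive integer. Thus $S^3_{-n}(K)=L(n,q)$ for some $q$, and it suffices to prove $n\le 4g(K)+3$.

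Next I would check that the hypotheses of Theorem \ref{Slope Bounds} hold for $K$. Lens spaces are $L$-spaces, so a knot admitting a lens space surgery is an $L$-space knot; as $K$ is non-trivial and now carries a negative $L$-space surgery slope, its $L$-space surgery slopes are negative, which is exactly the normalization under which Theorem \ref{Slope Bounds} is stated. It then remains to exhibit a sharp negative definite filling of $S^3_{-n}(K)$. This is supplied by the Remark following the definition of a sharp filling: every lens space, written in the form $L(n,q)$, bounds the sharp negative definite linear plumbing $-X(n,-q)$.

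With these hypotheses verified, Theorem \ref{Slope Bounds} applies to $K$ and yields $n\le l(m(K))\le 4g(K)+3$. Since $|p|=n$, this is precisely the asserted bound, and the proof is complete.

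I expect no genuine obstacle in this argument, as the entire analytic content is already packaged in Theorem \ref{Slope Bounds}. The only points demanding care are bookkeeping points rather than mathematical difficulties: confirming that, after mirroring, the surgered manifold remains a lens space and that the resulting slope has the sign consistent with the convention of negative $L$-space surgery slopes required by Theorem \ref{Slope Bounds}, together with invoking the two standard facts that lens spaces are $L$-spaces and bound the sharp linear plumbings recorded in the earlier Remark.
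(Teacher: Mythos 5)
Your argument is correct and follows the paper's proof exactly: mirror $K$ if necessary so the lens space surgery slope is negative, observe that every lens space bounds a sharp negative definite filling (the linear plumbing), and apply Theorem \ref{Slope Bounds}. The extra bookkeeping you supply (lens spaces are $L$-spaces, orientation reversal of a lens space is a lens space) is implicit in the paper's one-line proof.
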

 \begin{proof}
     If necessary, we can mirror the knot $K$ so that $S_{-n}^3(K)$ is a lens space; every lens space bounds a sharp negative definite filling and the claim follows from Theorem \ref{Slope Bounds}. 
 \end{proof}
 \begin{rem}
     If a non-trivial knot $K$ admits an integer lens space surgery $L(p,q)$, then $\Lambda(p,q)$ embeds as a changemaker lattice in the Euclidean lattice with codimension one and we can find $g(K)$ in terms of the changemaker coordinates; this was proved by Greene in \cite{greene2012lspace}. McCoy used this result and gave another proof of corollary $6.2$ in \cite{McCoy_2017}   
 \end{rem}
 \begin{rem}
     If $S_{4g(K)+3}^3(K)$ is a lens space, then it is possible to prove $K$ is in fact an alternating torus knot; see \cite{Rasmussen_2004} for more details. 
 \end{rem}
 \section{Conclusion}
 We conclude this note with some questions and speculations. There are two ways of generalizing Theorem \ref{Main THM}; the first is one to ask whether such a result would hold for rational surgeries along a knot $K$ with slopes greater than 
a positive number $N(K)$ depending on $K$. Assume $r=\frac{p}{q}$ is a positive rational number that is large enough in comparison to $g_4(K)$. The three manifold $S_{r}^3(K)$ bounds a positive definite two handle cobordism $X$ with $Q_X\cong\Lambda(p,q)$.
 \begin{que}
     Suppose $X^{'}$ is a positive definite four manifold with no torsion in its homology and $\partial X^{'} = S^3_r(K)$ as an oriented manifold. Can one prove that
     \begin{equation*}
         Q_{X^{'}}\cong\Lambda(p,q)\oplus\langle1\rangle^b
     \end{equation*}
     for some non-negative integer $b$?
 \end{que}
 In order to answer this question in affirmative, one would need a characterization result for $\Lambda(p,q)$ similar to the one given in \cite{owens2008characterisation} for $\langle n\rangle\oplus\langle 1\rangle^b$. The second way to generalize theorem \ref{Main THM} is finding a similar result for integer surgeries along components of a link. Let $L$ be a link in $S^3$ with $h$ components and consider a vector $v=(v_1,v_2,\dots,v_h)$ in $\mathbb{Z}_{> 0}^h$. Denote the three manifold obtained from performing $v_i$-surgery along the $i$-th component of $L$ by $S^3_v(L)$ and let $X$ be the trace of this surgery. If we have
 \begin{equation*}
     \sum_{i=1}^{i=h}v_i>\sum_{K_i\neq K_j\in L} lk(K_i,K_j),
 \end{equation*}
 then $X$ is a positive definite four manifold.
 \begin{que}
     Fix a link $L$ in $S^3$. Does there exist a positive integer $N(L)$ such that if $v_i$ is greater than $N(L)$ for every $i$, then every positive definite four manifold $X^{'}$ bounding $S^3_{v}(L)$ satisfies
     \begin{equation*}
         Q_{X^{'}}\cong Q_X\oplus\langle1\rangle^b
     \end{equation*}
     for some non-negative integer $b$?
 \end{que}
 Our last two questions are about the behaviour of $l(K)$ under negative crossing changes and connected sums.
 \begin{que}
     Let $K^+$ be a knot with a positive crossing and $K^-$ denote the knot resulting from changing the crossing. Is there a fixed positive integer $N$ such that $l(K^+)\leq l(K^-)+N$ for every $K^+$?
 \end{que}
 \begin{que}
     Let $K_1$ and $K_2$ be two knots. Can one find a fixed positive integer $N$ such that
     \begin{equation*}
         l(K_1\# K_2)\leq l(K_1)+l(K_2)+N
     \end{equation*}
     for every pair of knots $K_1$ and $K_2$?
 \end{que}
 
\bibliographystyle{amsplain}
\bibliography{Ref}

\begin{flushleft}
    Boston College. Massachusetts, USA.\\
    naserisa@bc.edu 
\end{flushleft}

\end{document}